\newtheorem{theorem}{Theorem}[section]
\newtheorem{lemma}[theorem]{Lemma}
\newtheorem{proposition}[theorem]{Proposition}
\newtheorem{definition}[theorem]{Definition}
\numberwithin{equation}{section} 
\def\bkR{{\rm I\kern-.17em R}}
\def\bkC{{\rm \kern.24em
       \vrule width.05em height1.4ex depth-.05ex
       \kern-.26em C}}
\begin{document}

\renewcommand{\arraystretch}{1.8}

\title{On a $2$-orthogonal polynomial sequence via quadratic decomposition
}

\author{ Teresa Augusta Mesquita
}

\author{T. Augusta Mesquita\footnote{Corresponding author (teresa.mesquita@fc.up.pt; taugusta.mesquita@gmail.com)} }
\date{}
\maketitle
\begin{center}
{\scriptsize
Escola Superior de Tecnologia e Gest\~ao, Instituto Polit{\'e}cnico de Viana do Castelo, Rua Escola Industrial e Comercial de Nun' \'Alvares, 4900-347, Viana do Castelo, Portugal, \&\\
Centro de Matem\'{a}tica da Universidade do Porto, Rua do Campo Alegre, 687, 4169-007 Porto, Portugal\\
}
\end{center}

\begin{abstract}
We apply a symbolic approach of the general quadratic decomposition of polynomial sequences - presented in a previous article referenced herein - to polynomial sequences fulfilling specific orthogonal conditions towards two given functionals $u_{0},u_{1}$ belonging to the dual of the vector space of polynomials with coefficients in $\mathbb{C}$.  The general quadratic decomposition produces four new sets of polynomials whose properties are investigated with the help of the mentioned symbolic approach together with further commands which inquire relevant features, as for instance, the classical character of a polynomial sequence. The computational results are detailed for a wide range of choices of parameters and co-recursive type polynomial sequences are also explored.

\end{abstract}
{\footnotesize
\noindent \textbf{Keywords}: quadratic decomposition, $d$-orthogonal polynomials, $d$-symmetric polynomials,  symbolic computations.\\
\noindent  \textbf{2010 Mathematics Subject Classification} : 42C05 , 33C45  , 68W30 , 33-04}
%%%%%%%%%%%%%%%%%%%%%%%%%%%%%%%%%%%%%%%%%

\section{Introduction}
\label{intro}
The symmetry feature of a polynomial sequence $\{ W_{n}(x) \}_{n \geq 0}$ can be translated by the fact that every element of odd degree $W_{2n+1}(x)$ is an odd polynomial function and every element of even degree $W_{2n}(x)$ is an even  polynomial function. This description is also equivalent to the 
splitting of a symmetric polynomial sequence into two new polynomial sequences $\{P_{n}(x) \}_{n \geq 0}$ and $\{ R_{n}(x) \}_{n \geq 0}$, such that
\begin{eqnarray}
\label{symQD-1} W_{2n}(x)= P_{n}\left( x^2\right),\\
\label{symQD-2} W_{2n+1}(x)= x R_{n}\left( x^2\right).
\end{eqnarray}
This is called the quadratic decomposition (QD) of the symmetric polynomial sequence $\{ W_{n}(x) \}_{n \geq 0}$ and the most famous example is given by the Hermite orthogonal polynomial sequence which QD has the Laguerre orthogonal polynomial sequence both in the role of  $\{P_{n}(x) \}_{n \geq 0}$ and $\{ R_{n}(x) \}_{n \geq 0}$ \cite{x,C:78,GS:1939}. 
This simple QD has been generalised initially for all polynomial sequences (non-symmetric included)  \cite{P:90,Quad2-93} and later allowing the replacement both of $x^2$ by any quadratic mapping $x^2+px+q$ and the monomial $x$ in \eqref{symQD-2} by a generic monic polynomial of degree of one $x-a$ \cite{A:2010}. In particular, it is worth mentioning a widely known QD defined as in \eqref{symQD-1}-\eqref{symQD-2},  though with the quadratic transformation $x^2-1$, of the normalised Gegenbauer orthogonal polynomial sequence that has two Jacobi orthogonal polynomial sequences as  $\{P_{n}(x) \}_{n \geq 0}$ and $\{ R_{n}(x) \}_{n \geq 0}$ \cite{KS:1994}.
In brief, a QD of a symmetric orthogonal polynomial sequence provides two new orthogonal sequences, not necessarily symmetric (e.g. \cite[p.34]{P:90}), and several further studies have showed the potencial of the quadratic decomposition in the study of symmetric and non-symmetric orthogonal polynomial sequences (e.g. \cite{AFL:2008,PM-Tounsi:2012}).

In this paper, we decompose quadratically $2$-orthogonal polynomial sequences, following the wider set up of the QD and we put in evidence a family for which the application of the symbolic implementation of the general QD presented in \cite{ATZ:2018} together with broad symbolic procedures introduced in \cite{T:2012} provides interesting results. In particular, that $2$-orthogonal family yields three further $2$-orthogonal polynomial sequences through the most general quadratic decomposition.

This approach has been applied to some $2$-orthogonal polynomial sequences by means of the most simple cubic decomposition, as we can read for instance in \cite{Douak2classiques}. A generalised notion of symmetry of order $d$ is equivalent to a natural polynomial decomposition of order $d+1$ which defines $d+1$ new polynomial sequences. 
In other words, the most evident decomposition for a $2$-orthogonal sequence and $2$-symmetric would be the cubic decomposition and this technique allowed Douak and Maroni to list already a considerable amount of classical $2$-orthogonal polynomial sequences in \cite{Douak2classiques}.

This article is organised as follows. In Section 2 we recall some notation and basic concepts about polynomial and dual sequences, $d$-symmetry and $d$-orthogonality, for any positive integer $d$. Section 3 is devoted to the  general QD, firstly providing its definition and characterisation as given in  \cite{A:2010}, and secondly presenting the steps followed in the symbolic implementation applied throughout our study. In Section 4 we describe the particular family chosen as one of the main subjects of this paper and the correspondent output of that symbolic implementation. We also show in Section 5 the potential of the computational support in the analysis of the changes produced in the QD by the modification of a finite amount of initial parameters of the given $2$-orthogonal polynomial sequence.
In the final section we prove some recurrence relations fulfilled  by the four sets of polynomials.

\section{Notation and basic concepts}
%%%%%%%%%%%%%%%%%%%%%%%%%%%%%%%%%%%%%%%%%%%
Let $\mathcal{P}$ be the vector space of polynomials with coefficients in $\bkC$ and let
$\mathcal{P}^{\prime}$ be its topological dual space. We denote by $\langle w ,p\rangle$ the action of the form or linear functional $w \in\mathcal{P}^{\prime }$ on $p\in\mathcal{P}$. In particular,
$\langle w, x^{n}\rangle:=\nolinebreak\left( w \right) _{n},n\geq 0$ represent the moments of $w$.
In the following, we will call polynomial sequence (PS) to any sequence
${\{W_{n}\}}_{n \geq 0}$ such that $\deg W_{n}= n,\; \forall n \geq 0$.
We will also call monic polynomial sequence (MPS) to a PS so that all polynomials have leading coefficient equal to one.

If ${\{W_{n}\}}_{n \geq 0}$ is a MPS, there exists a unique sequence
$\{w_n\}_{n\geq 0}$, $w_n\in\mathcal{P}^{\prime}$, called the dual sequence of $\{W_{n}\}_{n\geq 0}$, such that,
\begin{equation}\label{SucDual}
<w_{n},W_{m}>=\delta_{n,m}\ , \ n,m\ge 0.
\end{equation}

%%%%%%%%%%%%%%%%%%%%%%%%%
%
\noindent On the other hand, given a MPS ${\{W_{n}\}}_{n \geq 0}$, the expansion of $xW_{n+1}(x)$, defines sequences in $\bkC$, ${\{\beta_{n}\}}_{n \geq 0}$ and
$\{\chi_{n,\nu}\}_{0 \leq \nu \leq n,\; n \geq 0},$ such that
\begin{eqnarray}
&&W_{0}(x)=1, \;\; W_{1}(x)=x-\beta_{0},\label{divisao_ci}\\
 && xW_{n+1}(x)= W_{n+2}(x)+ \beta_{n+1}W_{n+1}(x)+\sum_{\nu=0}^{n}\chi_{n,\nu}W_{\nu}(x).\label{divisao}
\end{eqnarray}
This relation is usually called the structure relation of  ${\{W_{n}\}}_{n \geq 0}$, and ${\{\beta_{n}\}}_{n \geq 0}$ and
$\{\chi_{n,\nu}\}_{0 \leq \nu \leq n,\; n \geq 0}$ are called the structure coefficients (SCs).
\medskip

%%%%%%%%%
\begin{definition}\cite{Douak2classiques}
A PS ${\{W_{n}\}}_{n \geq 0}$ is $d$-symmetric if it fulfils
$$W_{n}(\xi_{k}x)=\xi_{k}^{n}W_{n}(x),\;\; n \geq 0, \;\; k=1,2,\ldots, d,$$
where $\xi_{k}=\exp\left( \frac{2ik\pi}{d+1}\right),\;\;
k=1,\ldots,d, \,\; \xi_{k}^{d+1}=1$.

If $d=1$, then $\xi_{1}=-1$ and we meet the definition of a symmetric PS in which we have the following property $W_{n}(-x)=(-1)^{n}W_{n}(x),\;\; n \geq 0.$
\end{definition}

\begin{lemma}\cite{Douak2classiques}\label{lemanaturalCD2-sym}
A PS ${\{W_{n}\}}_{n \geq 0}$ is $d$-symmetric if and only if it fulfils
$$W_{m}(x)=x^{\mu}\displaystyle \sum_{k=0}^{n}   a_{m, (d+1)p+\mu}x^{(d+1)p},$$
\par where $m=(d+1)n+\mu,\;\;
0 \leq \mu \leq d,\; n \geq 0$.
\end{lemma}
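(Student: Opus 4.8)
The plan is to unwind the definition of $d$-symmetry directly and translate the functional equations $W_n(\xi_k x) = \xi_k^n W_n(x)$ into constraints on the coefficients of each $W_m$. First I would write a generic polynomial $W_m$ of degree $m$ in the expanded form $W_m(x) = \sum_{j=0}^{m} a_{m,j}\, x^{j}$, so that the condition $W_m(\xi_k x) = \xi_k^{m} W_m(x)$ becomes, comparing coefficients of $x^j$, the scalar identities $a_{m,j}\,\xi_k^{\,j} = a_{m,j}\,\xi_k^{\,m}$ for every $j$ with $0\le j\le m$ and every $k\in\{1,\dots,d\}$. Hence for each index $j$ one has either $a_{m,j}=0$ or $\xi_k^{\,j}=\xi_k^{\,m}$ for all $k$, i.e. $\xi_k^{\,m-j}=1$ for all $k$.

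The key number-theoretic step is then to identify exactly which exponents survive. Since $\xi_1 = \exp\!\left(\tfrac{2i\pi}{d+1}\right)$ is a primitive $(d+1)$-th root of unity, $\xi_1^{\,m-j}=1$ already forces $(d+1)\mid (m-j)$; conversely, if $(d+1)\mid(m-j)$ then $\xi_k^{\,m-j}=1$ for every $k$ because each $\xi_k^{d+1}=1$. Therefore $a_{m,j}\ne 0$ is possible only when $j\equiv m \pmod{d+1}$. Writing $m=(d+1)n+\mu$ with $0\le\mu\le d$, the admissible exponents are exactly $j=(d+1)p+\mu$ for $p=0,1,\dots,n$, which gives the stated form $W_m(x)=x^{\mu}\sum_{p=0}^{n} a_{m,(d+1)p+\mu}\,x^{(d+1)p}$ (the summation variable being renamed, and $a_{m,j}$ for the remaining indices understood as the coefficients of $W_m$). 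For the converse direction I would simply observe that any polynomial of that prescribed shape satisfies $W_m(\xi_k x)=\xi_k^{\,m}W_m(x)$ term by term, since each monomial $x^{(d+1)p+\mu}$ picks up the factor $\xi_k^{(d+1)p+\mu}=\xi_k^{\mu}=\xi_k^{(d+1)n+\mu}=\xi_k^{\,m}$.

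I do not expect a serious obstacle here; the only point requiring a little care is the ``for all $k$'' quantifier — one must note that it suffices to use $k=1$ (the primitive root) to get the divisibility, and then all other $\xi_k$ come along automatically, so the condition really is equivalent to $(d+1)\mid(m-j)$ rather than something weaker. A secondary bookkeeping point is matching the index set: for $m=(d+1)n+\mu$ the largest admissible exponent not exceeding $m$ is $(d+1)n+\mu = m$ itself, so $p$ ranges over $0,\dots,n$ exactly, consistent with the leading term having degree $m$. With these observations the equivalence follows immediately in both directions.
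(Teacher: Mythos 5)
Your argument is correct: expanding $W_m$ in the monomial basis, comparing coefficients in $W_m(\xi_k x)=\xi_k^m W_m(x)$, and using that $\xi_1$ is a primitive $(d+1)$-th root of unity (so the conditions for $k=2,\dots,d$ follow automatically) yields exactly the constraint $(d+1)\mid(m-j)$ on the surviving exponents, and the converse is the term-by-term check you describe. Note that the paper itself gives no proof of this lemma --- it is quoted from the cited reference of Douak and Maroni --- and your coefficient-comparison proof is precisely the standard argument for that result, so there is nothing to reconcile; you also correctly repair the statement's index typo by summing over $p=0,\dots,n$.
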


%%%%%%%%%
\begin{lemma}\cite{variations}\label{lema1}
For each $u \in \mathcal{P}^{\prime}$ and each $m \geq 1$, the two following propositions are equivalent.
\begin{description}
\item[a)]  $\langle u, W_{m-1} \rangle \neq 0,\:\: \langle u, W_{n} \rangle=0,\: n \geq m$.
\item[b)] $\exists \lambda_{\nu} \in \mathbb{C},\:\: 0 \leq \nu \leq m-1,\:\: \lambda_{m-1}\neq 0$ such that
$u=\sum_{\nu=0}^{m-1}\lambda_{\nu} w_{\nu}$. \end{description}
\end{lemma}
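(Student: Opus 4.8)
The plan is to prove the equivalence of (a) and (b) by exploiting the defining biorthogonality relation \eqref{SucDual} between the MPS $\{W_n\}_{n\ge 0}$ and its dual sequence $\{w_n\}_{n\ge 0}$, together with the fact that $\{W_0,\dots,W_{m-1}\}$ is a basis of the subspace $\mathcal{P}_{m-1}$ of polynomials of degree at most $m-1$, since $\deg W_n = n$.

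First I would prove the implication (b)$\Rightarrow$(a). Assume $u=\sum_{\nu=0}^{m-1}\lambda_\nu w_\nu$ with $\lambda_{m-1}\neq 0$. Then for any $n$, using \eqref{SucDual} I compute $\langle u, W_n\rangle = \sum_{\nu=0}^{m-1}\lambda_\nu \langle w_\nu, W_n\rangle = \sum_{\nu=0}^{m-1}\lambda_\nu\delta_{\nu,n}$. If $n\ge m$ this sum is empty, so $\langle u, W_n\rangle = 0$; if $n=m-1$ it equals $\lambda_{m-1}\neq 0$. This gives (a) immediately.

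Next I would prove (a)$\Rightarrow$(b). Here the key point is that any form $u\in\mathcal{P}'$ is completely determined by its values $\langle u, W_n\rangle$, $n\ge 0$ — equivalently by its moments, since $\{W_n\}$ is a basis of $\mathcal{P}$. Define $\lambda_\nu := \langle u, W_\nu\rangle$ for $0\le \nu\le m-1$ and consider the candidate $v := \sum_{\nu=0}^{m-1}\lambda_\nu w_\nu$. By the computation already done in the first part, $\langle v, W_n\rangle = \lambda_n = \langle u, W_n\rangle$ for $0\le n\le m-1$, while $\langle v, W_n\rangle = 0 = \langle u, W_n\rangle$ for $n\ge m$ by hypothesis (a). Hence $u$ and $v$ agree against every $W_n$, and since $\{W_n\}_{n\ge 0}$ spans $\mathcal{P}$ (and by a standard continuity/density argument in $\mathcal{P}'$ which I would invoke, or simply because a form is determined by its action on a basis), $u = v$. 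Finally $\lambda_{m-1} = \langle u, W_{m-1}\rangle \neq 0$ by the first half of (a), so the representation is of the required form.

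The only delicate point — the main obstacle — is justifying that a functional $u\in\mathcal{P}'$ is uniquely determined by the countable family of scalars $\langle u, W_n\rangle$; this is where one must be slightly careful, but it follows because $\{W_n\}_{n\ge0}$ is an algebraic basis of $\mathcal{P}$ (each polynomial being a \emph{finite} linear combination of the $W_n$), so the values $\langle u, W_n\rangle$ determine $\langle u, p\rangle$ for every $p\in\mathcal{P}$ by linearity alone, with no topological subtlety needed. Everything else is a routine application of \eqref{SucDual}. I would also remark that this lemma is the natural generalisation of the classical characterisation of forms with finitely many nonzero moments, here phrased intrinsically via the dual sequence.
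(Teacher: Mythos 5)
Your proof is correct and is essentially the standard argument for this lemma, which the paper itself does not reprove but simply cites from Maroni: direct biorthogonality for (b)$\Rightarrow$(a), and for (a)$\Rightarrow$(b) the comparison of $u$ with $\sum_{\nu=0}^{m-1}\langle u,W_\nu\rangle w_\nu$ on the basis $\{W_n\}_{n\ge 0}$, noting that a form is determined by its values on this algebraic basis since every polynomial is a finite linear combination of the $W_n$. Your remark that no topological issue arises is exactly the right observation, so nothing is missing.
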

%%%%%%%%%
%%%%%%%%%
\begin{definition}\cite{Douak2classiques,Maroni_toulouse}
Given $\Gamma^{1}, \Gamma^{2},\ldots, \Gamma^{d} \in \mathcal{P}^{\prime}$, $d \geq 1$, the polynomial sequence ${\{W_{n}\}}_{n \geq 0}$ is called d-orthogonal polynomial sequence (d-OPS) with respect to  $\Gamma=(\Gamma^{1},\ldots, \Gamma^{d} )$ if it fulfils 
\begin{equation}\label{d ortogonal}
 \langle \Gamma^{\alpha}, W_{m}W_{n}  \rangle=0, \;\; n \geq md +\alpha,\,\; m \geq 0,
\end{equation}
 \begin{equation}\label{d ortogonal regular}
 \langle \Gamma^{\alpha}, W_{m}W_{md+\alpha-1}  \rangle \neq 0, \;\; m \geq 0,
\end{equation}
for each integer $\alpha=1,\ldots, d$.

The conditions (\ref{d ortogonal}) are called the d-orthogonality conditions and the conditions (\ref{d ortogonal regular}) are called the regularity conditions. In this case, the functional $\Gamma$, of dimension $d$, is said regular.
\end{definition}
The  $d$-dimensional functional $\Gamma$ is not unique. Nevertheless, from Lemma \ref{lema1}, we have:
$$\Gamma^{\alpha}=\sum_{\nu=0}^{\alpha-1}\lambda_{\nu}^{\alpha} u_{\nu}, \;\; \lambda_{\alpha-1}^{\alpha} \neq 0, \; 1\leq \alpha \leq d.$$
Therefore, since $w=(w_{0},\ldots, w_{d-1} )$ is unique, we use to consider the canonical functional of dimension $d$, $w=(w_{0},\ldots, w_{d-1} )$, saying that
${\{W_{n}\}}_{n \geq 0}$ is d-OPS ($d\geq 1$) with respect to
$w=(w_{0},\ldots, w_{d-1})$ if
$$ \langle w_{\nu}, W_{m}W_{n}  \rangle=0, \;\; n \geq md +\nu+1,\,\; m \geq 0,$$
$$  \langle w_{\nu}, W_{m}W_{md+\nu}  \rangle \neq 0, \;\; m \geq 0,$$
for each integer $\nu=0,1,\ldots, d-1$.
It is important to remark that when $d=1$ we meet again the notion of regular orthogonality. Furthermore, the $d$-orthogonality corresponds to the generalisation of the well-known recurrence relation fulfilled by the orthogonal polynomials, as the next result recalls.

%%%%%%%%%%%

\begin{theorem}\cite{Maroni_toulouse} \label{recurrence relation for d ortho}
Let ${\{W_{n}\}}_{n \geq 0}$ be a MPS. The following assertions are equivalent:
\begin{description}
\item[a)]  ${\{W_{n}\}}_{n \geq 0}$ is $d$-orthogonal with respect to $w=(w_{0},\ldots, w_{d-1})$.
\item[b)] ${\{W_{n}\}}_{n \geq 0}$ satisfies a $(d+1)$-order recurrence relation ($d \geq 1$):
$$W_{m+d+1}(x)=(x-\beta_{m+d})W_{m+d}(x)-\sum_{\nu=0}^{d-1}\gamma_{m+d-\nu}^{d-1-\nu}W_{m+d-1-\nu}(x), \:\: m \geq 0,$$
with initial conditions
$$W_{0}(x)=1,\:\: W_{1}(x)=x-\beta_{0}\;\: \textrm{and if }d \geq 2:$$
$$W_{n}(x)=(x-\beta_{n-1})W_{n-1}(x)-\sum_{\nu=0}^{n-2}\gamma_{n-1-\nu}^{d-1-\nu}W_{n-2-\nu}(x), \:\: 2 \leq n \leq  d,$$
and regularity conditions: $\gamma_{m+1}^{0} \neq 0,\: \; m \geq 0.$
\end{description}
\end{theorem}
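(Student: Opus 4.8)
The plan is to prove Theorem~\ref{recurrence relation for d ortho} by establishing the two implications separately, using throughout the fundamental bijection between a MPS and its dual sequence recorded in \eqref{SucDual}, together with the characterisation of functionals with finitely many non-vanishing ``coordinates'' in Lemma~\ref{lema1}.

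\textbf{From b) to a).} Suppose $\{W_n\}_{n\ge 0}$ satisfies the displayed $(d+1)$-order recurrence with the stated initial conditions and $\gamma_{m+1}^{0}\ne 0$. First I would expand $xW_{n}$ in the basis $\{W_k\}$: the recurrence shows that $xW_{n}$ is a combination of $W_{n+1},W_{n},\dots,W_{n-d}$ only, i.e. $\langle w_\nu, xW_n\rangle = 0$ whenever $n > \nu + d + 1$ roughly speaking — more precisely, acting with the dual functional $w_\nu$ on the recurrence and using \eqref{SucDual} gives that the ``shift operator'' multiplication-by-$x$ is represented by a banded matrix of bandwidth $d+1$ in the dual sequence, which is the statement that $x w_\nu \in \mathrm{span}\{w_{\nu-1},\dots,w_{\nu+d}\}$ (with the obvious truncation near $0$) and that the coefficient of $w_{\nu+d}$ is exactly $\gamma$-type and hence non-zero. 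Iterating, $x^m w_\nu$ lies in $\mathrm{span}\{w_0,\dots,w_{md+\nu}\}$ with non-zero coefficient on $w_{md+\nu}$; pairing with $W_n$ via \eqref{SucDual} then yields $\langle w_\nu, x^m W_n\rangle = 0$ for $n > md+\nu$ and $\langle w_\nu, x^m W_n\rangle \ne 0$ for $n = md+\nu$. Since $W_m$ is monic of degree $m$, writing $W_m = x^m + (\text{lower order})$ and using the vanishing just obtained for the lower-order terms gives $\langle w_\nu, W_m W_n\rangle = \langle w_\nu, x^m W_n\rangle$ for $n \ge md+\nu$, which delivers exactly \eqref{d ortogonal}--\eqref{d ortogonal regular} with $\Gamma^\alpha = w_{\alpha-1}$.

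\textbf{From a) to b).} Conversely assume $\{W_n\}$ is $d$-orthogonal with respect to $w=(w_0,\dots,w_{d-1})$. Since $xW_{m+d}$ has degree $m+d+1$ it expands as $W_{m+d+1} + \sum_{k=0}^{m+d} c_{m,k} W_k$. To identify the coefficients $c_{m,k}$ I would apply the dual functionals: $c_{m,k} = \langle w_{k}', xW_{m+d}\rangle$ up to normalisation, and more usefully, test against $w_\nu$ for $0\le\nu\le d-1$. For $k \le m+d-1-d = m-1$ write $k$ in the form that lets $w_\nu$ detect $W_k$; the $d$-orthogonality conditions \eqref{d ortogonal} force $\langle w_\nu, x W_{m+d}\cdot(\text{appropriate polynomial of degree }k)\rangle = 0$, which kills $c_{m,k}$ for all $k$ with $0 \le k \le m-1$, leaving only the ``top $d+1$'' coefficients $c_{m,m+d}, c_{m,m+d-1},\dots,c_{m,m}$, i.e. the recurrence has the claimed length $d+1$. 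Renaming $c_{m,m+d} = \beta_{m+d}$ and $c_{m,m+d-1-\nu} = -\gamma_{m+d-\nu}^{d-1-\nu}$ gives the stated form; the initial conditions for $2 \le n \le d$ come from the same argument applied to $xW_{n-1}$ with the degree too small for the full band, and the regularity $\gamma_{m+1}^0 \ne 0$ is forced by the regularity condition \eqref{d ortogonal regular} with $m$ chosen so that $md+\nu$ hits the relevant index (this is where the non-degeneracy genuinely enters, via the non-vanishing leading pairing $\langle w_\nu, W_m W_{md+\nu}\rangle$).

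\textbf{Main obstacle.} The routine part is the degree bookkeeping; the delicate part is the precise index accounting that shows the band has width exactly $d+1$ and no more, and in particular that the lowest coefficient $\gamma_{m+1}^{0}$ is non-zero — this requires carefully matching which regularity condition \eqref{d ortogonal regular}, for which $\nu$ and which $m$, certifies the non-vanishing of each $\gamma$, and handling the boundary cases $n \le d$ where the recurrence is truncated. I would isolate this in a short lemma computing $\langle w_\nu, W_m W_{md+\nu-1}\rangle$ in terms of the product of the $\gamma^0$'s, after which both directions close cleanly. (Since this is a known result from \cite{Maroni_toulouse}, one may alternatively simply cite it; but the sketch above is the self-contained route.)
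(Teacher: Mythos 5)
The paper does not prove this theorem at all --- it is quoted from \cite{Maroni_toulouse} --- so your sketch can only be judged on its own merits. Your direction b) $\Rightarrow$ a) is essentially sound and is the standard dual-sequence argument: acting with $w_\nu$ on the recurrence and invoking Lemma~\ref{lema1} gives $xw_\nu\in\mathrm{span}\{w_{\nu-1},\dots,w_{\nu+d}\}$ with coefficient $\gamma^{0}_{\nu+1}\neq 0$ on $w_{\nu+d}$, iteration gives $x^{m}w_\nu\in\mathrm{span}\{w_0,\dots,w_{md+\nu}\}$ with non-zero top coefficient, and the monicity reduction $\langle w_\nu,W_mW_n\rangle=\langle w_\nu,x^mW_n\rangle$ for $n\geq md+\nu$ then delivers \eqref{d ortogonal}--\eqref{d ortogonal regular}. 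The index bookkeeping there checks out.

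The gap is in a) $\Rightarrow$ b), at the sentence ``the $d$-orthogonality conditions \dots kill $c_{m,k}$ for all $0\le k\le m-1$.'' By \eqref{SucDual} the coefficient of $W_k$ in $xW_{m+d}$ is $c_{m,k}=\langle w_k,xW_{m+d}\rangle$, and for $d\le k\le m-1$ this involves dual forms $w_k$ about which the hypothesis a) says nothing directly; testing against $w_\nu$ with $0\le\nu\le d-1$ only determines $c_{m,0},\dots,c_{m,d-1}$. What is missing is the bridge lemma that your phrase ``write $k$ in the form that lets $w_\nu$ detect $W_k$'' gestures at but never supplies: from \eqref{d ortogonal}--\eqref{d ortogonal regular} and Lemma~\ref{lema1} one first shows, writing $k=jd+\alpha$ with $0\le\alpha\le d-1$, that $x^{j}w_\alpha=\sum_{i=0}^{jd+\alpha}\lambda_i w_i$ with $\lambda_{jd+\alpha}\neq0$ (expand $x^{j}$ in the basis $\{W_i\}$ and use the orthogonality and regularity conditions), and then inverts this triangular system to get $w_k\in\mathrm{span}\{x^{j}w_\alpha: jd+\alpha\le k\}$. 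Only after that does $c_{m,k}$ become a combination of terms $\langle w_\alpha,x^{j+1}W_{m+d}\rangle$, which \eqref{d ortogonal} annihilates exactly when $jd+\alpha\le m-1$, and the same representation of $w_m$ yields $\gamma^{0}_{m+1}=c_{m,m}\neq0$ from \eqref{d ortogonal regular}. The auxiliary lemma you propose to isolate (the product-of-$\gamma^{0}$ formula for $\langle w_\nu,W_mW_{md+\nu-1}\rangle$) serves the regularity bookkeeping in the converse direction but does not fill this truncation step, which is the actual heart of a) $\Rightarrow$ b).
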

%\end{itemize}

%%%%%%%%%%%%%
In other words, when the MPS $\{ W_{n} \}_{n \geq 0}$ is $d$-orthogonal, most of the structure coefficients $\{\chi_{n,\nu}\}_{0 \leq \nu \leq n,\; n \geq 0}$ vanish and the remaining $d$ sequences are renamed as $\gamma$'s. For example, while reading this latest theorem with $d=1$ we encounter the well known recurrence relation fulfilled by an orthogonal MPS (MOPS)
$$W_{m+2}(x)=(x-\beta_{m+1})W_{m+1}(x)-\gamma_{m+1}W_{m}(x)\, ,$$
$$W_{0}(x)=1,\:\: W_{1}(x)=x-\beta_{0}, \quad \gamma_{m+1} \neq 0,\: \; m \geq 0.$$

%CLASSICAL
The classical orthogonal polynomials fulfil many important properties (e.g. \cite{C:78}) and they are characterized by the Hahn's property, that is to say, the monic polynomial sequence obtained through the standard differentiation $D$,  $\{ (n+1)^{-1}DW_{n+1}(x)\}_{n \geq 0}$, is also orthogonal. 
The $d$-orthogonal MPSs fulfilling this property are also called $d$-classical MPSs in the Hahn's sense and there are already several studies  regarding these sequences \cite{Ben cheik & Douak Acad.Sci.Paris,Douak-Appell-d-ortho,Douak-two-Laguerre,Douak2classiques,d-classical,1997-KD-PM-1}  that are still not known comprehensively.

Generally speaking, when we define the MPS of derivatives 
$$W^{[1]}_{n}(x)=(n+1)^{-1}DW_{n+1}(x), \; n \geq 0\, ,$$ of any given MPS $\{ W_{n} \}_{n \geq 0}$  with structure relation \eqref{divisao_ci}-\eqref{divisao},  we easily get by differentiation the following recurrence relation which is a helpful tool in the forthcoming computational definition of $\{ W^{[1]}_{n} \}_{n \geq 0}$:
\begin{align}
\nonumber & W^{[1]}_{n}(x)=\frac{W_{n}(x)}{n+1}+\frac{n}{n+1}\left( x-\beta_{n}  \right)W^{[1]}_{n-1}(x)-\frac{1}{n+1}\sum_{\nu=1}^{n-1}\nu \, \chi_{n-1,\nu}W^{[1]}_{\nu-1}(x)\, ,\\
\label{derivatives-rec}  & n\geq 1\; , \;  W^{[1]}_{0}(x)=1 \, , \;  W^{[1]}_{1}(x)=x-\frac{1}{2}\left(\beta_{0}+\beta_{1}\right).
\end{align}

\medskip
%%%%%%%%%%%%%%%%%%%%%%%%%%%%%%
Moreover, we already know that any MOPS may define other MOPSs by a simple change of the initial structure coefficients.
This may be achieved either altering the structure coefficient (SC) $\beta_{0}$ in which case we define a co-recursive sequence, or perturbing the initial sets of SCs $\{\beta_{0},\ldots, \beta_{r}\}$  and $\{\gamma_{1},\ldots, \gamma_{r}\}$ for any positive integer $r$, as we read in the following definition.
%%%%%%%
\begin{definition}\cite{P:91}\label{defn r-perturbed}
Let ${\{W_{n}\}}_{n \geq 0}$ be an orthogonal MPS satisfying b) of Theorem \ref{recurrence relation for d ortho} for $d=1$ and let $r \geq 1$.
\newline
Given $\mu_{0} \in \bkC$,  $\mu=(\mu_{1},\ldots,  \mu_{r})\: \in \bkC^{r}$ and $\lambda=(\lambda_{1},\ldots,  \lambda_{r})\:
\in \Big(\bkC\setminus \{0\}\Big)^{r}$, where either $ \mu_{r} \neq 0$ or $ \lambda_{r} \neq 1$,
the orthogonal MPS ${\{\tilde{W}_{n}(x)\}}_{n \geq 0}$ defined by
\begin{equation}\label{}
\left\{\begin{array}{l}
\tilde{W}_{0}(x)=1,\quad\tilde{W}_{1}(x)=x-\tilde{\beta}_{0}, \\
\tilde{W}_{n+2}(x)=(x-\tilde{\beta}_{n+1})\tilde{W}_{n+1}(x)-\tilde{\gamma}_{n+1}\tilde{W}_{n}(x)\ ,\quad n \geq 0\ ,
\end{array}
\right.
\end{equation}
with
$$\begin{array}{ll}
\tilde{\beta}_{0}=\beta_{0}+\mu_{0} &,\quad \\
\tilde{\beta}_{n}=\beta_{n}+\mu_{n} &,\quad \tilde{\gamma}_{n}=\lambda_{n}\gamma_{n} \ ,\quad 1\leq n \leq r\ ,\\
\tilde{\beta}_{n}=\beta_{n} & , \quad  \tilde{\gamma}_{n}=\gamma_{n} \ \ \ \ ,\quad n \geq r+1\ ,
\end{array}$$
is called a perturbed sequence of order $r$ of ${\{W_{n}\}}_{n \geq 0}$ and is denoted by
$${\left\{W_{n}\left(\mu_{0};  \begin{array}{c}
\mu\\
\lambda\\
\end{array}; r ; x
 \right)\right\}}_{n \geq 0}.$$
\end{definition}
When a single change on the coefficient $\beta_{0}$ is taken and thus  $ \mu_{n} = 0$ and $ \lambda_{n} = 1\;$ for $1\leq n \leq r\;$ the resultant MOPS is called a co-recursive sequence of ${\{W_{n}\}}_{n \geq 0}$ and is notated by ${\{W_{n}(\mu_{0}; x)\}}_{n \geq 0}$.

A perturbation on a specific set of the initial SCs can always be considered for any $d$-orthogonal MPS, through its recurrence relation, if we assure the regularity conditions  of the perturbed $d$-orthogonal MPS ($\tilde{\gamma}^{0}_{n}\neq 0$). In this work, we will describe characteristics obtained with the help of symbolic manipulations  for a given $2$-orthogonal MPS ${\{W_{n}\}}_{n \geq 0}$ and regarding the $2$-orthogonal MPS ${\{\tilde{W}_{n}\}}_{n \geq 0}$ defined initially as a co-recursive and secondly by means of further perturbations. 
In brief, the recurrence relations fulfilled by each one are as follows, with coefficients $\beta_{n}, \, \gamma_{n+1}^{1}, \, \gamma_{n+1}^{0}, \; n \geq 0$, with $\gamma_{n+1}^{0} \neq 0,\:\:\: n \geq 0$, in view of Theorem \ref{recurrence relation for d ortho}.
\begin{align}
\label{RR2ortho}W_{n+1}(x)=(x-\beta_{n})W_{n}(x)-\gamma_{n}^{1}W_{n-1}(x)-\gamma_{n-1}^{0}W_{n-2}(x),\:\: n \geq 2,\\
\nonumber W_{0}(x)=1,\:\: W_{1}(x)=x-\beta_{0},\:\:
W_{2}(x)=(x-\beta_{1})W_{1}(x)-\gamma_{1}^{1}.
\end{align}
It is often used a lighter notation for the $2$-orthogonality by renaming the gammas as follows (cf. \cite{Douak-two-Laguerre})
\begin{align}
\label{RR2ortho}W_{n+1}(x)=(x-\beta_{n})W_{n}(x)-\alpha_{n}W_{n-1}(x)-\gamma_{n-1}W_{n-2}(x),\:\: n \geq 2,\\
\nonumber W_{0}(x)=1,\:\: W_{1}(x)=x-\beta_{0},\:\:
W_{2}(x)=(x-\beta_{1})W_{1}(x)-\alpha_{1}.
\end{align}
The co-recursive $2$-orthogonal MPS ${\{\tilde{W}_{n}(x)\}}_{n \geq 0}$ or ${\{W_{n}(\mu; x)\}}_{n \geq 0}$ that later on we will consider is defined by the next recurrence.

\begin{align}
\label{RR2ortho-tilde}\tilde{W}_{n+1}(x)=(x-\beta_{n})\tilde{W}_{n}(x)-\alpha_{n}\tilde{W}_{n-1}(x)-\gamma_{n-1}\tilde{W}_{n-2}(x),\:\: n \geq 2,\\
\nonumber \tilde{W}_{0}(x)=1,\:\: \tilde{W}_{1}(x)=x-\beta_{0}-\mu,\:\: \tilde{W}_{2}(x)=(x-\beta_{1})\tilde{W}_{1}(x)-\alpha_{1}.
\end{align}
Let us remark that in the next sections we will notate $\tilde{\beta}_{0}= \tau:= \beta_{0}+\mu$.
%%%%%%%%%%%%%%%%%%%%%%%%%%%%%%%%%%%%%%%%%%%%%%%%%

\section{Quadratic Decomposition and symbolic implementation}
%%%%%%%%%%%%%%%%%%%%%%%%%%%%%%%%%%%%%%%%%%%%%%%%%
\noindent
Given a MPS $\{W_n\}_{n\geq 0}$, and fixing a monic quadratic polynomial mapping
\begin{equation}
\:\omega(x)=x^2+px+q\;,\:\:p,q\in \bkC
\end{equation}
and a constant $a\in \bkC,$ it is always possible to associate with it, two MPSs  $\:\left\{P_n\right\}_{n\geq 0}\:$ and $\:\left\{R_n\right\}_{n\geq 0}\:$ 
and two other sequences of polynomials  $\:\left\{a_n\right\}_{n\geq 0}\:$
and $\:\left\{b_n\right\}_{n\geq 0}\:$ through the unique decomposition
\begin{eqnarray}
&& W_{2n}(x) = P_n(\omega (x))+(x-a) a_{n-1}(\omega(x)) \label{eq1}\\
&& W_{2n+1}(x) = b_n(\omega (x))+(x-a) R_{n}(\omega (x)),\;\; n \geq 0, \label{eq2}
\end{eqnarray}
where  $\:\deg
\left( a_n(x)\right) \leq n\:,\:\:\deg \left( b_n(x) \right) \leq n\:,$ and $a_{-1}(x)=0\:\:$ \cite{A:2010}.
Thus, in this general QD, we have the three free parameters $p$, $q$ and $a$.

\noindent Thus, any MPS $\{W_n\}_{n\geq 0}$ gives rise to further four sequences of polynomials by quadratic decomposition (QD), being $\{P_n\}_{n\geq 0}$ and $\{R_n\}_{n\geq 0}$ MPSs, the so-called principal components.
It is useful to assemble the four sequences produced by any QD in a matrix format as follows.
\begin{equation}\label{matrixM}
\left(
          \begin{array}{cc}
            P_{n}(x) & a_{n-1}(x) \\
            b_{n}(x) & R_{n}(x) \\
          \end{array}
        \right)
\end{equation}
A QD is said diagonal when the often called secondary component sequences $\:\left\{a_n\right\}_{n\geq 0}\:$ and $\:\left\{b_n\right\}_{n\geq 0}\:$ are null. For example, when $a=p=q=0$, the QD produced by a symmetric MPS is diagonal.

\noindent Recently, in \cite{ATZ:2018} it was presented the computational implementation of the following theoretical characterisation of the four components of a QD, with applications on specific families of orthogonal polynomials.

\begin{proposition}\cite{A:2010}
\label{prop_nova}
A MPS $\left\{W_n\right\}_{n\geq 0}\:$
defined by (\ref{divisao_ci})-(\ref{divisao}) fulfils (\ref{eq1})-(\ref{eq2}) if and only if the following recurrence relations hold.
\begin{eqnarray}
\label{pascaleq5.0} && \; b_0(x)=a-\beta_0 \\
%%%
\nonumber &&\; P_{n+1}(x)=-\displaystyle\sum_{\nu=0}^n\chi_{2n,2\nu}P_\nu(x)+\left(x-\omega(a)\right)R_n(x)
+\left(a-\beta_{2n+1}\right)b_n(x)\\
%%%
\label{pascaleq5} &&\quad \quad \quad \quad -\displaystyle\sum_{\nu=0}^{n-1}\chi_{2n,2\nu+1}b_\nu(x) \\
%%%
%%%%%%
\nonumber &&\; a_n(x)=-\displaystyle\sum_{\nu=0}^n\chi_{2n,2\nu}a_{\nu-1}(x)-\left(a+p+\beta_{2n+1}\right)R_n(x)+b_n(x)\\
\label{pascaleq6} &&\quad \quad \quad \quad -\displaystyle\sum_{\nu=0}^{n-1}\chi_{2n,2\nu+1}R_\nu(x)\\
\nonumber && \; b_{n+1}(x)=-\displaystyle\sum_{\nu=0}^n\chi_{2n+1,2\nu+1}b_\nu(x)+\left(a-\beta_{2n+2}\right)P_{n+1}(x)\\
\label{pascaleq7}  &&\quad \quad \quad \quad +
\left(x-\omega(a)\right)a_n(x) -\displaystyle\sum_{\nu=0}^n\chi_{2n+1,2\nu}P_\nu(x)\\
\nonumber  && \; R_{n+1}(x)=-\displaystyle\sum_{\nu=0}^n\chi_{2n+1,2\nu+1}R_\nu(x)+P_{n+1}(x)-\left(a+p+\beta_{2n+2}\right)a_n(x)\\
\label{pascaleq8} &&\quad \quad \quad \quad  -\displaystyle\sum_{\nu=0}^n\chi_{2n+1,2\nu}a_{\nu-1}(x)
\end{eqnarray}
\end{proposition}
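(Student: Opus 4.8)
The plan is to prove Proposition \ref{prop_nova} by substituting the decomposition \eqref{eq1}-\eqref{eq2} into the defining structure relation \eqref{divisao} and identifying, via the uniqueness of the decomposition, the even and odd parts as well as the components along and orthogonal to the factor $(x-a)$. First I would record the two elementary identities that drive everything: the multiplication-by-$x$ rule in terms of $\omega$, namely $x\cdot g(\omega(x)) = \bigl(x\text{-part}\bigr)$ manipulations built from $x^2 = \omega(x) - p x - q$, so that for any polynomial $g$ one has $x\,g(\omega(x))$ decomposing with an $(x-a)$ factor, and the shift identity $x - \omega(a) = (x^2+px+q) - (a^2+pa+q) + \bigl(\text{linear correction}\bigr)$ rewritten as $\omega(x)-\omega(a) = (x-a)(x+a+p)$. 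These two facts let me convert every occurrence of $x\,P_\nu(\omega(x))$, $x(x-a)a_{\nu-1}(\omega(x))$, etc., into the canonical form $(\text{poly in }\omega) + (x-a)(\text{poly in }\omega)$.

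The key computational steps, in order, are: (i) write $xW_{2n+1}(x)$ using \eqref{eq2}, apply the multiplication rule to $x\,b_n(\omega(x))$ and to $x(x-a)R_n(\omega(x))$, and collect the result as $\bigl[\text{even data}\bigr] + (x-a)\bigl[\text{odd data}\bigr]$; (ii) on the other side of \eqref{divisao} with index $2n+1$, i.e. $xW_{2n+2}(x) = W_{2n+3}(x) + \beta_{2n+2}W_{2n+2}(x) + \sum_\nu \chi_{2n+1,\nu}W_\nu(x)$, expand every $W$ through \eqref{eq1}-\eqref{eq2}, splitting the sum $\sum_\nu \chi_{2n+1,\nu}W_\nu$ into its even part $\sum_\nu \chi_{2n+1,2\nu}W_{2\nu}$ and odd part $\sum_\nu \chi_{2n+1,2\nu+1}W_{2\nu+1}$; (iii) match the coefficient of $(x-a)^0$ (the ``pure $g(\omega)$'' part) to obtain one pair of relations and the coefficient of $(x-a)^1$ to obtain the other — this is where the $(x+a+p)$ factor from $\omega(x)-\omega(a)$ produces the $(a+p+\beta_{\bullet})$ terms in \eqref{pascaleq6} and \eqref{pascaleq8}; (iv) repeat the whole procedure starting from $xW_{2n}(x)$ (structure relation at even index) to obtain \eqref{pascaleq5} and \eqref{pascaleq7}; (v) handle the low-degree initial data $W_0, W_1$ directly to get $b_0(x) = a-\beta_0$ in \eqref{pascaleq5.0} and to anchor the recurrences. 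The converse direction is the easier half: given sequences satisfying \eqref{pascaleq5.0}-\eqref{pascaleq8}, define $W_{2n}, W_{2n+1}$ by \eqref{eq1}-\eqref{eq2}, check the degrees and leading coefficients so that $\{W_n\}$ is a genuine MPS, and verify \eqref{divisao} by running the same algebra backwards; uniqueness of the QD (already established in \cite{A:2010}) guarantees the two descriptions coincide.

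The main obstacle I anticipate is purely organisational bookkeeping rather than conceptual: keeping the parity of indices straight while expanding a sum like $\sum_{\nu=0}^{n}\chi_{2n,\nu}W_\nu$ into contributions to $P_\bullet$, $a_{\bullet-1}$, $b_\bullet$, $R_\bullet$, and making sure the index ranges (e.g. that $\chi_{2n,2\nu+1}$ runs only to $\nu=n-1$ because $\deg$ forces $2\nu+1 \le 2n-1$) are exactly as written in \eqref{pascaleq5}-\eqref{pascaleq8}. A secondary subtlety is that $x\,g(\omega(x))$ is \emph{not} simply $(x-a)g(\omega(x)) + a\,g(\omega(x))$ in a way that keeps the second term a polynomial in $\omega$ alone — one must instead use $x g(\omega(x)) = (x-a)g(\omega(x)) + a g(\omega(x))$ and then, when $g$ itself is multiplied by another $(x-a)$, reduce $(x-a)^2 = \omega(x) - \omega(a)$; tracking these reductions carefully is what generates the $\left(x-\omega(a)\right)$ factors multiplying $R_n$ and $a_n$ in \eqref{pascaleq5} and \eqref{pascaleq7}. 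Once the two multiplication identities are set up cleanly and one commits to matching the two ``channels'' (coefficient of $1$ and coefficient of $x-a$) at each parity, the four recurrences fall out by direct comparison.
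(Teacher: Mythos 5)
The paper itself gives no proof of Proposition \ref{prop_nova}: it is quoted from \cite{A:2010}. Your plan is essentially the argument of that reference, and it is sound: substitute \eqref{eq1}--\eqref{eq2} into the structure relation \eqref{divisao} taken at even and at odd index, invoke the uniqueness of writing any polynomial as $f_{1}(\omega(x))+(x-a)f_{2}(\omega(x))$ to equate the two channels separately, and use $\omega(x)-\omega(a)=(x-a)(x+a+p)$ to generate the $\left(x-\omega(a)\right)$ and $\left(a+p+\beta_{\bullet}\right)$ coefficients, with $W_{1}(x)=x-\beta_{0}$ giving \eqref{pascaleq5.0} and the converse obtained by running the algebra backwards and appealing to uniqueness of the QD. One local error to repair: your closing instruction ``reduce $(x-a)^{2}=\omega(x)-\omega(a)$'' is false; the correct reduction is $(x-a)^{2}=\omega(x)-\omega(a)-(p+2a)(x-a)$, equivalently $x(x-a)=\omega(x)-\omega(a)-(a+p)(x-a)$, and it is exactly the extra term $-(a+p)(x-a)$ that, combined with $-\beta_{2n+1}R_{n}$ and $-\beta_{2n+2}a_{n}$ from the right-hand side, produces the coefficients $-\left(a+p+\beta_{2n+1}\right)$ and $-\left(a+p+\beta_{2n+2}\right)$ in \eqref{pascaleq6} and \eqref{pascaleq8}; used as written, your identity would yield $a-\beta_{\bullet}$ there instead. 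Since you state the correct factorisation $\omega(x)-\omega(a)=(x-a)(x+a+p)$ earlier and attribute the $(a+p+\beta_{\bullet})$ terms to it, this is a slip in the bookkeeping sentence rather than a flaw in the method, but it must be fixed before the computation in steps (i)--(iv) is carried out.
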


The input data of such implementation is given by the coefficients $a$, $p$ and $r$ (if we decide to make them precise), and more importantly the structure coefficients $\beta_{n}$ and $\chi_{n,\nu}$, $0\leq \nu \leq n\, ,\,$ $n \geq 0$.%

In order to investigate the structure of each component we have gathered some short commands developed in  \cite{T:2012}, namely a generic command that computes the structure coefficients of any previously defined MPS, and further instructions that search for the possibility of defining a proper MPS from the secondary components.

The symbolic implementation used has the following structure.

\begin{itemize}
\item [Step 1] Input data (adapted to a $2$-orthogonal MPS)
\medskip

\begin{tabular}{|l|l|}
\hline
\cellcolor{gray!25}SCs of $\{W_{n}(x)\}_{n \geq 0}$ &  \cellcolor{gray!25}Comments  \\
\hline
\hline
$\beta_{n}= \cdots $  &   $n \geq 0$ \\
\hline
$\chi_{n,n}  = \cdots $ & $\chi_{n,n}= \alpha_{n+1} \; , \; n \geq 0$\\
\hline
$\chi_{n,n-1} =  \cdots  $ & $\chi_{n,n-1} = \gamma_{n}\; , \; n \geq 1$  \\
\hline 
$ \chi_{n,\nu} = 0  \; , \;\; 0 \leq \nu < n-1  $ & $n \geq 1$\\
\hline
\hline
\cellcolor{gray!25}Further parameters &  \cellcolor{gray!25}Comments  \\
\hline
\hline
$nmax=\cdots$  & positive integer that indicates the higher degree \\
  & of the polynomial sets that will be computed\\
\hline
$a=\cdots; p=\cdots; q=\cdots$ & only for specific cases of the QD\\
\hline
\end{tabular}

\medskip

\item [Step 2] Application of the symbolic implementation of the QD yielding the data set
\begin{equation*}
\left(
          \begin{array}{cc}
            P_{i}(x) & a_{i-1}(x) \\
            b_{i}(x) & R_{i}(x) \\
          \end{array}
        \right) \quad i=0,\ldots, nmax
\end{equation*}
\smallskip

\item[Step 3] Application of the routine SC$_{\zeta}$   that computes the initial SCs $\beta^{\zeta}_{n}$ and $\chi^{\zeta}_{n,\nu}$ of any given MPS ${\{\zeta_{n}\}}_{n \geq 0}$, to the MPSs $\{P_{n}\}_{n \geq 0}$ and $\{R_{n}\}_{n \geq 0}$, using their initial data. 
\smallskip

\item [Step 4] Analysis of the secondary components $\{a_{n}\}_{n \geq 0}$ and $\{b_{n}\}_{n \geq 0}$. When the obtained data issues $\deg (  b_{n}(x)  )= n-k$ for a fixed $k$, $n \geq k$, and $b_{n}(x)=0\; , \; 0 \leq n < k$,  for example, it is possible to define the initial elements of the correspondent MPS $B_{n}(x)= \frac{1}{\eta_{n+k}}b_{n+k}$, $n \geq 0$, where $b_{n+k}(x)=\eta_{n+k}x^{n}+\cdots $, and apply the instruction of the third step.

\smallskip

\item [Step 5] Definition of the derivative sequence $\zeta^{[1]}_{n}(x)= \dfrac{1}{n+1} D\left( \zeta_{n+1}(x) \right) $ using recurrence \eqref{derivatives-rec} for $\zeta = P, R$ and when possible for $\zeta=A, B$, and subsequent computation of their SCs as in Step 3.

\end{itemize}

%%%%%%%%%%%%%%%%%%%%%%%%%%%%%%%%%%%%%%%%%%
\section{Results of a case study}\label{sec:results-case-study}
%%%%%%%%%%%%%%%%%%%%%%%%%%%%%%%%%%%%%%%%%%

The $2$-orthogonal MPS that we are going to explore in the next sections is defined by structure coefficients (SCs) that are either constants modulo two or have alternating behaviour, as next described in Table \ref{SCWn} using the constants $\beta$, $\alpha_{1}$, $\alpha_{2} \in \mathbb{C}$ and $\gamma \in \mathbb{C}\backslash \{ 0\} $, and we will consider a general QD with parameters $a$, $p$ and $q$.
\begin{table}[h]
\caption{ Structure Coefficients (SCs) of the $2$-orthogonal MPS  $\{ W_{n} (x)\}_{n \geq 0}$}
\begin{align}\label{main2MPS}
&\beta_{2n}= -(p+\beta) \; , \quad \beta_{2n+1}= \beta \; , \quad n \geq 0\, , \\
\nonumber &\chi_{2n,2n} = \alpha_{2n+1}  = \alpha_{1} \; , \quad  \chi_{2n+1,2n+1} = \alpha_{2n+2}  = \alpha_{2} \;\; , n \geq 0\, , \\
\nonumber& \chi_{n,n-1} = \gamma_{n}  = (-1)^{n} \gamma \quad , \, n \geq 1\, , \\
\nonumber & \chi_{n,\nu} = 0  \; , \quad 0 \leq \nu < n-1   \quad ,\,  n \geq 1 \, .
\end{align}
\label{SCWn}
\end{table}
Taking into account that the polynomial sets $\{a_n\}_{n\geq 0}$ and $\{b_n\}_{n\geq 0}$ may not have a MPS structure, when it is confirmed computationally that $\{a_n\}_{n= 0, \ldots, nmax}$ or $\{b_n\}_{n= 0, \ldots, nmax}$ span the vectorial space $\mathcal{P}_{max}$ of polynomial functions of degree lower or equal to $nmax$,  for a certain positive integer $nmax$, we will use the following notation.
\begin{align*}
&A_{n}(x) =\left( l_{a,n}  \right)^{-1} a_{n}(x)  \, , \quad a_{n}(x)= l_{a,n} x^n +  \, (\textrm{terms\, of\,degree\,less\,than\,}\, n )\\
&B_{n}(x) =\left( l_{b,n}  \right)^{-1} b_{n}(x)  \, , \quad b_{n}(x)= l_{b,n} x^n + \, (\textrm{terms\, of\,degree\,less\,than\,}\, n )
\end{align*}
We must begin to clarify that we have concluded through the symbolic computation of the initial SCs of the MPS $\{ W^{[1]}_{n} (x)\}_{n \geq 0}$ that $\{ W_{n} (x)\}_{n \geq 0} $ defined by \eqref{main2MPS} is not a classical $2$-orthogonal MPS.

\subsection{ Case I : $p \neq -\beta-a$}

Let us assume that the parameters $p$ and $a$ fulfill $p \neq -\beta -a$. The initial results produced by the computational implementation suggest that $\{a_{n}\}_{n \geq 0}$ is null, in particular, for all $p$ and $q$,
$$W_{2n}(x)= P_{n}\left( x^2+px+q\right)\, , \,n \geq 0\,,$$ and the remaining three polynomial sets are also $2$-orthogonal as indicated in Table \ref{Case-1}.
\begin{table}[h]
\caption{Description of the general QD of the $2$-orthogonal MPS defined by SCs \eqref{main2MPS} when $p \neq -\beta-a$.}
\begin{center}
\begin{tabular}{|l|l|}
\hline
$\{P_{n}\}_{n \geq 0}$ is $2$-orthogonal with & In general, $\{P^{[1]}_{n}\}_{n \geq 0}$ is not $d$-orthogonal  \\
recurrence coefficients \eqref{RC-P}  & for $d=1,\ldots, nmax$ \\
\hline
$\{R_{n}\}_{n \geq 0}$ is $2$-orthogonal with & $\{R^{[1]}_{n}\}_{n \geq 0}$ is $2$-orthogonal with \\
recurrence coefficients \eqref{RC-R}  & recurrence coefficients \eqref{RC-dR}  \\
\hline
$\{B_{n}\}_{n \geq 0}$ is $2$-orthogonal with & In general, $\{B^{[1]}_{n}\}_{n \geq 0}$ is not $d$-orthogonal \\
recurrence coefficients \eqref{RC-B}  & for $d=1,\ldots, nmax$ \\
\hline
\end{tabular}
\end{center}
\label{Case-1}
\end{table}%
\begin{align}\label{RC-P}
&\beta_{0}^{P}= q+\alpha_{1} + \left(p+\beta \right)\beta\; , \quad \beta_{n}^{P}= q+\alpha_{1} +\alpha_{2} + \left(p+\beta \right)\beta \; , \; n \geq 1\\
\nonumber  &\alpha_{n}^{P} = \alpha_{1} \alpha_{2} + \gamma(p+2\beta) \; , \quad \gamma_{n}^{P} = \gamma^{2} \; , \; n \geq 1. \\
\nonumber &\\
%%%%%%%%%
\label{RC-R}
& \beta_{n}^{R}= q+\alpha_{1} +\alpha_{2} + \left(p+\beta \right)\beta \; , \; n \geq 0\\
\nonumber  &\alpha_{n}^{R} = \alpha_{1} \alpha_{2} + \gamma(p+2\beta) \; , \quad \gamma_{n}^{R} = \gamma^{2} \; , \; n \geq 1. \\
\nonumber &\\
%%%%%%%%%
\label{RC-B} 
& \beta_{0}^{B}= q+ \alpha_{1} +\alpha_{2}+ \frac{a \beta \left(p+ \beta \right) +\beta(p+\beta)^{2}  -\gamma }{a+p+\beta}\\
\nonumber  &\beta_{n}^{B}= q+\alpha_{1} +\alpha_{2} + \left(p+\beta \right)\beta \; , \,n \geq 1\\
\nonumber  &\alpha_{n}^{B} = \alpha_{1} \alpha_{2} + \gamma(p+2\beta) \; , \quad \gamma_{n}^{B} =  \gamma^{2} \; , \; n \geq 1. 
\end{align}
\begin{align} \label{RC-dR} 
& \beta_{n}^{R^{[1]}}= q+\alpha_{1} +\alpha_{2} + \left(p+\beta \right)\beta \; , n \geq 0\\
\nonumber  &\alpha_{n}^{R^{[1]}} = \frac{n(n+3)}{(n+1)(n+2)}\left(  \alpha_{1} \alpha_{2} + \gamma(p+2\beta)\right) \; , \quad\\
\nonumber & \gamma_{n}^{R^{[1]}} =  \frac{n(n+5)}{(n+2)(n+3)}\gamma^{2} \; , \; n \geq 1. 
\end{align}

\subsubsection{ Particular case : $p \neq -\beta-a$ and $\alpha_{2}=0$}\label{subsubsec:Case2}

Let us continue to assume that the parameters $p$ and $a$ fulfill $p \neq -\beta -a$. Some of the SCs obtained previously indicated the presence of the factor $\alpha_{2}$, thus it seemed relevant to collect the same data when $\alpha_{2}=0$.
As a particular case of the previous one, most of the outlined features are maintained, as for instance, $W_{2n}(x)= P_{n}\left( x^2+px+q\right)\, , \,n \geq 0\,$. The summary can be checked in Table \ref{Case-2}.

\begin{table}[h]
\caption{Description of the general QD of the $2$-orthogonal MPS defined by SCs \eqref{main2MPS} when $p \neq -\beta-a$ and $\alpha_{2}=0$.}
\begin{center}
\begin{tabular}{|l|l|}
\hline
$\{P_{n}\}_{n \geq 0}$ is $2$-orthogonal and  & $\{P^{[1]}_{n}\}_{n \geq 0}$ is $2$-orthogonal and \\
identical to $\{R_{n}\}_{n \geq 0}$  & identical to  $\{R^{[1]}_{n}\}_{n \geq 0}$ \\
\hline
$\{R_{n}\}_{n \geq 0}$ is $2$-orthogonal with & $\{R^{[1]}_{n}\}_{n \geq 0}$ is $2$-orthogonal with \\
recurrence coefficients \eqref{RC-R-alpha2=0}  & recurrence coefficients \eqref{RC-dR-alpha2=0}  \\
\hline
$\{B_{n}\}_{n \geq 0}$ is $2$-orthogonal with & In general, $\{B^{[1]}_{n}\}_{n \geq 0}$ is not $d$-orthogonal \\
recurrence coefficients \eqref{RC-B-alpha2=0}  & for $d=1,\ldots,nmax$ \\
\hline
\end{tabular}
\end{center}
\label{Case-2}
\end{table}%
%%%%
\begin{align}
\label{RC-R-alpha2=0}
& \beta_{n}^{R}= q+\alpha_{1}  + \left(p+\beta \right)\beta \; , n \geq 0\\
\nonumber  &\alpha_{n}^{R} = \gamma(p+2\beta) \; , \quad \gamma_{n}^{R} = \gamma^{2} \; , \; n \geq 1. \\
\nonumber &\\
\label{RC-B-alpha2=0} 
& \beta_{0}^{B}=q+ \alpha_{1}+\frac{  a\beta \left(p+\beta \right) +\beta(p+\beta)^{2}  -\gamma}{a+p+\beta}\\
\nonumber  &\beta_{n}^{B}= q+\alpha_{1} + \left(p+\beta \right)\beta \; , \; n \geq 1\\
\nonumber  &\alpha_{n}^{B} =  \gamma(p+2\beta) \; , \quad \gamma_{n}^{B} =  \gamma^{2} \; , \; n \geq 1. 
\end{align}
\begin{align} \label{RC-dR-alpha2=0} 
& \beta_{n}^{R^{[1]}}= q+\alpha_{1} + \left(p+\beta \right)\beta \; , n \geq 0\\
\nonumber  &\alpha_{n}^{R^{[1]}} = \frac{n(n+3)}{(n+1)(n+2)}  \gamma(p+2\beta) \; , \\
&\nonumber  \gamma_{n}^{R^{[1]}} =  \frac{n(n+5)}{(n+2)(n+3)}\gamma^{2} \; , \; n \geq 1. 
\end{align}

\subsection{ Case II : $p = -\beta-a$}

Let us now assume that the parameters $p$ and $a$ fulfill $p = -\beta -a$. The initial results produced by the computational implementation continue to suggest that $\{a_{n}\}_{n \geq 0}$ is null, in particular, $W_{2n}(x)= P_{n}\left( x^2+px+q\right)\, , \,n \geq 0\,,$ and the remaining information is indicated in Table \ref{Case-3}.
In this case, the  polynomial set $\{b_{n}(x)\}_{n \geq 0}$ fulfils $\deg\left( b_{n}(x)  \right)=n-1\, , \, n \geq 1$ and $b_{0}(x)=0$, with $b_{n}(x)= \gamma x^{n-1} +  \, (\textrm{terms\, of\,degree\,less\,than\,}\, n-1 )$,  and therefore we define the MPS
$$\bar{B}_{n}(x)= \gamma^{-1} b_{n+1}(x)\, , \, n \geq 0.$$
\begin{table}[h]
\caption{Description of the general QD of the $2$-orthogonal MPS defined by SCs \eqref{main2MPS} when $p = -\beta-a$.}
\begin{center}
\begin{tabular}{|l|l|}
\hline
$\{P_{n}\}_{n \geq 0}$ is $2$-orthogonal with & In general, $\{P^{[1]}_{n}\}_{n \geq 0}$ is not $d$-orthogonal\\
recurrence coefficients \eqref{RC-P-II}  & for $d=1,\ldots, nmax$ \\
\hline
$\{R_{n}\}_{n \geq 0}$ is $2$-orthogonal with & $\{R^{[1]}_{n}\}_{n \geq 0}$ is $2$-orthogonal with \\
recurrence coefficients \eqref{RC-R-II}  & recurrence coefficients \eqref{RC-dR-II}  \\
\hline
$\{\bar{B}_{n}\}_{n \geq 0}$ is $2$-orthogonal and & $\{\bar{B}^{[1]}_{n}\}_{n \geq 0}$ is $2$-orthogonal and \\
identical to $\{R_{n}\}_{n \geq 0}$   & identical to  $\{R^{[1]}_{n}\}_{n \geq 0}$\\
\hline
\end{tabular}
\end{center}
\label{Case-3}
\end{table}%

\begin{align}\label{RC-P-II}
&\beta_{0}^{P}= q+\alpha_{1} -a\beta\; , \quad \beta_{n}^{P}= q+\alpha_{1} +\alpha_{2} -a \beta \; , n \geq 1\\
\nonumber  &\alpha_{n}^{P} = \alpha_{1} \alpha_{2} - a \gamma + \beta \gamma \; , \quad \gamma_{n}^{P} = \gamma^{2} \; , \; n \geq 1. \\
\nonumber &\\
\label{RC-R-II}
& \beta_{n}^{R}= q+\alpha_{1} +\alpha_{2} -a\beta \; , n \geq 0\\
\nonumber  &\alpha_{n}^{R} = \alpha_{1} \alpha_{2} - a \gamma +\beta\gamma \; , \quad \gamma_{n}^{R} = \gamma^{2} \; , \; n \geq 1. 
\end{align}
\begin{align} \label{RC-dR-II} 
& \beta_{n}^{R^{[1]}}= q+\alpha_{1} +\alpha_{2} -a\beta \; , n \geq 0\\
\nonumber  &\alpha_{n}^{R^{[1]}} = \frac{n(n+3)}{(n+1)(n+2)}\left(  \alpha_{1} \alpha_{2} - a\gamma+\beta\gamma \right) \; , \\
&\nonumber  \gamma_{n}^{R^{[1]}} =  \frac{n(n+5)}{(n+2)(n+3)}\gamma^{2} \; , \; n \geq 1. 
\end{align}
More precisely, when $p = -\beta-a$ we get the following general QD:
\begin{align*}
&W_{2n}(x)  = P_{n}(x^2+px+q)\\
&W_{2n+1}(x)  = \gamma R_{n-1}(x^2+px+q)+(x-a)R_{n}(x^2+px+q)
\end{align*}
where $\{R_{n}(x)\}_{n \geq 0}$ is a 2-classical MPS and $\{P_{n}(x)\}_{n \geq 0}$ is a co-recursive of $\{R_{n}(x)\}_{n \geq 0}$.

\subsubsection{ Particular case : $p = -\beta-a$ and $\alpha_{2}=0$}

Comparing the coefficients described in \eqref{RC-P-II}-\eqref{RC-R-II}, we easily confirm that if $\alpha_{2}=0$, the principal components $\{P_{n}\}_{n \geq 0}$ and $\{R_{n}\}_{n \geq 0}$ coincide and similarly to subsection \ref{subsubsec:Case2} , we conclude that $\{P^{[1]}_{n}\}_{n \geq 0}$ is also $2$-orthogonal. Moreover, $\{P_{n}\}_{n \geq 0}$, $\{R_{n}\}_{n \geq 0}$ and $\{\bar{B}_{n}\}_{n \geq 0}$ are identical. The correspondent SCs can be obtained through \eqref{RC-R-II} for $\{R_{n}\}_{n \geq 0}$ and  \eqref{RC-dR-II} for $\{R^{[1]}_{n}\}_{n \geq 0}$ replacing $\alpha_{2}$ by zero.

\section{Results for a co-recursive sequence and for higher perturbations}

Let us now consider a co-recursive sequence of the $2$-orthogonal MPS  with SCs \eqref{main2MPS} $\{ W_{n} (x)\}_{n \geq 0}$ as defined previously in \eqref{RR2ortho-tilde} by a single change in the parameter $\beta_{0}$. More precisely, we will perform the quadratic decomposition of a MPS defined by the SCs \eqref{main2MPS-co}  and follow the same notation and definitions of the Section \ref{sec:results-case-study}. Further, we assume that $\tau \neq -p-\beta$ otherwise we would be facing the MPS  $\{ W_{n} (x)\}_{n \geq 0}$ once more. 
\begin{align}\label{main2MPS-co}
&\beta_{0}= \tau \; , \quad \beta_{2n}= -(p+\beta) \; , \; n \geq 1\,, \\
\nonumber &\beta_{2n+1}= \beta \; , \quad n \geq 0\, , \\
\nonumber &\chi_{2n,2n} = \alpha_{2n+1}  = \alpha_{1} \; , \quad  \chi_{2n+1,2n+1} = \alpha_{2n+2}  = \alpha_{2} \;\; , n \geq 0\, , \\
\nonumber& \chi_{n,n-1} = \gamma_{n}  = (-1)^{n} \gamma \quad , \, n \geq 1\, , \\
\nonumber & \chi_{n,\nu} = 0  \; , \quad 0 \leq \nu < n-1   \quad ,\,  n \geq 1 \, .
\end{align}

\subsection{ Case I : $\tau \neq a$ and $\tau \neq -p-\beta$}

Let us assume that the parameters $\tau$ and $a$ fulfill $\tau \neq a$ besides the obvious restriction $\tau \neq -p-\beta$. The initial results produced by the computational implementation no longer suggest that $\{a_{n}\}_{n \geq 0}$ is null and therefore we have now the four components to analyse. The essential information is issued in Table \ref{Case-1-co}.
\bigskip

\begin{table}[h]
\caption{Description of the general QD of the $2$-orthogonal MPS defined by SCs \eqref{main2MPS-co} when $\tau \neq a$ and $\tau \neq -p-\beta$.}
\begin{center}
\begin{tabular}{|l|l|}
\hline
$\{P_{n}\}_{n \geq 0}$ is $2$-orthogonal with & In general, $\{P^{[1]}_{n}\}_{n \geq 0}$ is not $d$-orthogonal \\
recurrence coefficients \eqref{RC-P-co}  & for $d=1,\dots,nmax$\\
\hline
$\{R_{n}\}_{n \geq 0}$ is $2$-orthogonal with & $\{R^{[1]}_{n}\}_{n \geq 0}$ is $2$-orthogonal with \\
recurrence coefficients \eqref{RC-R}  & recurrence coefficients \eqref{RC-dR}  \\
\hline
$\{A_{n}\}_{n \geq 0}$ is $2$-orthogonal and  & $\{A^{[1]}_{n}\}_{n \geq 0}$ is  $2$-orthogonal  and \\
coincides with  $\{R_{n}\}_{n \geq 0}$  & coincides with  $\{R^{[1]}_{n}\}_{n \geq 0}$  \\
\hline
$\{B_{n}\}_{n \geq 0}$ is $2$-orthogonal with & In general, $\{B^{[1]}_{n}\}_{n \geq 0}$ is not $d$-orthogonal \\
recurrence coefficients \eqref{RC-B-co}  & for $d=1,\dots,nmax$  \\
\hline
\end{tabular}
\end{center}
\label{Case-1-co}
\end{table}%

\begin{align}
\nonumber &\beta_{0}^{P}= ap+q+\alpha_{1} +a\beta +a\tau - \beta\tau \; , \quad \beta_{n}^{P}= q+\alpha_{1} +\alpha_{2} + \left(p+\beta \right)\beta \; , n \geq 1\\
\label{RC-P-co} &\alpha_{1}^{P} = \alpha_{1} \alpha_{2} + \gamma(\beta-\tau) \; , \quad \alpha_{n}^{P}=\alpha_{1} \alpha_{2} + \gamma(p+2\beta)  \\
\nonumber  &\gamma_{n}^{P} = \gamma^{2} \; , \; n \geq 1. \\
\nonumber &\\
\nonumber & \beta_{0}^{B}=q+\alpha_{2}+p \beta+ \beta^2 + \frac{\alpha_{1}\left(a+p+\beta  \right)-\gamma }{a-\tau}\\
\label{RC-B-co}   &\beta_{n}^{B}= q+\alpha_{1} +\alpha_{2} + \left(p+\beta \right)\beta \; , n \geq 1\\
\nonumber  &\alpha_{n}^{B} = \alpha_{1} \alpha_{2} + \gamma(p+2\beta) \; ,\\
\nonumber & \gamma_{n}^{B} =  \gamma^{2} \; , \; n \geq 1. 
\end{align}

\subsection{ Case II : $\tau = a$ and $\tau \neq -p-\beta$}

Let us finally assume that the parameters $\tau$ and $a$ are identical. The secondary component  $\{a_{n}\}_{n \geq 0}$ is not zero and therefore we have now the four components to analyse. Most of the information obtained in the previous case I is kept. The difference lies in the sequence $\{b_{n}\}_{n \geq 0}$ for which we get   $\deg\left( b_{n}(x)  \right)=n-1\, , \, n \geq 1$ and $b_{0}(x)=0$, with 
\newline $b_{n}(x) =\left( \gamma-\alpha_{1}(a+p+\beta) \right) x^{n-1} +  \, (\textrm{terms\, of\,degree\,less\,than\,}\, n-1 )
$ and therefore we define the MPS
$$\bar{B}_{n}(x)= \left( \gamma-\alpha_{1}(a+p+\beta) \right)^{-1} b_{n+1}(x)\, , \, n \geq 0.$$
The essential information is indicated in Table \ref{Case-2-co}.
\begin{table}[h]
\caption{Description of the general QD of the $2$-orthogonal MPS defined by SCs \eqref{main2MPS-co} when $\tau = a$ and $\tau \neq -p-\beta$.}
\begin{center}
\begin{tabular}{|l|l|}
\hline
$\{P_{n}\}_{n \geq 0}$ is $2$-orthogonal with & In general, $\{P^{[1]}_{n}\}_{n \geq 0}$ is not $d$-orthogonal \\
recurrence coefficients \eqref{RC-P-co-tauzero}  & for $d=1,\ldots, nmax$ \\
\hline
$\{R_{n}\}_{n \geq 0}$ is $2$-orthogonal with & $\{R^{[1]}_{n}\}_{n \geq 0}$ is $2$-orthogonal with \\
recurrence coefficients \eqref{RC-R}  & recurrence coefficients \eqref{RC-dR}  \\
\hline
$\{A_{n}\}_{n \geq 0}$ is $2$-orthogonal and  & $\{A^{[1]}_{n}\}_{n \geq 0}$ is  $2$-orthogonal  and \\
coincides with  $\{R_{n}\}_{n \geq 0}$  & coincides with  $\{R^{[1]}_{n}\}_{n \geq 0}$  \\
\hline
$\{\bar{B}_{n}\}_{n \geq 0}$ is $2$-orthogonal and & $\{\bar{B}^{[1]}_{n}\}_{n \geq 0}$ is $2$-orthogonal  and \\
coincides with  $\{R_{n}\}_{n \geq 0}$ & coincides with  $\{R^{[1]}_{n}\}_{n \geq 0}$\\
\hline
\end{tabular}
\end{center}
\label{Case-2-co}
\end{table}%

\begin{align}\label{RC-P-co-tauzero}
&\beta_{0}^{P}= a^2+ ap+q+\alpha_{1}  \; , \quad \beta_{n}^{P}= q+\alpha_{1} +\alpha_{2} + \left(p+\beta \right)\beta \; , n \geq 1\\
\nonumber  &\alpha_{1}^{P} = \alpha_{1} \alpha_{2} + \gamma(\beta-a) \; , \quad \alpha_{n}^{P}=\alpha_{1} \alpha_{2} + \gamma(p+2\beta)  \\
\nonumber  &\gamma_{n}^{P} = \gamma^{2} \; , \; n \geq 1. \\
\nonumber &
\end{align}
\subsection{Perturbation of order  two (I)}

The main structure of the results described in the previous sections holds  when we consider a larger perturbation of order two, given the non-zero constants $\eta_{1}\, , \eta_{2}$ and $\xi$, as follows.
\begin{align}\label{main2MPS-pert-order2-I}
&\beta_{0}= \tau \; , \quad \beta_{2n}= -(p+\beta) \; , \; n \geq 1\,, \\
\nonumber &\beta_{2n+1}= \beta \; , \quad n \geq 0\, , \\
\nonumber & \chi_{0,0}=\alpha_{1}\eta_{1}\, ,\; \chi_{1,1}=\alpha_{2}\eta_{2}\, ,\; \\
\nonumber &\chi_{2n,2n} = \alpha_{2n+1}  = \alpha_{1} \; , \quad  \chi_{2n+1,2n+1} = \alpha_{2n+2}  = \alpha_{2} \;\; , n \geq 1\, , \\
\nonumber& \chi_{1,0} =  - \gamma \xi \quad , \, \quad \chi_{n,n-1} = \gamma_{n}  = (-1)^{n} \gamma \quad , \, n \geq 2\, , \\
\nonumber & \chi_{n,\nu} = 0  \; , \quad 0 \leq \nu < n-1   \quad ,\,  n \geq 1 \, .
\end{align}
More precisely, the principal component sequences $\{ P_{n} \}_{n \geq 0}$ and $\{ R_{n} \}_{n \geq 0}$ are $2$-orthogonal and

\begin{align}
\nonumber &\beta_{0}^{P}= a p + q + a \beta + \alpha_{1}\eta_{1} + \left( a-\beta\right) \tau   \; , \quad \beta_{1}^{P}=  q + \alpha_{1}+\left( p+\beta \right) \beta + \alpha_{2}\eta_{2} \, ,\\
\nonumber &\beta_{n}^{P}= q+\alpha_{1} +\alpha_{2} + \left(p+\beta \right)\beta \; ,\; n \geq 2\, ,\\
\nonumber  &\alpha_{1}^{P} = a\gamma +\alpha_{1} \alpha_{2} \eta_{1}\eta_{2}+\gamma \xi \left( \beta-a \right)-\gamma\tau\; , \quad \alpha_{n}^{P}=\alpha_{1} \alpha_{2} + \gamma(p+2\beta)  \, ,\\
\nonumber  & \gamma_{1}^{P} =\gamma\left(a\alpha_{2}- a\alpha_{2}\eta_{2}+\gamma\xi-\alpha_{2}\tau+\alpha_{2}\eta_{2}\tau \right)\; , \quad
\gamma_{n}^{P} = \gamma^{2} \; , \; n \geq 2;
\end{align}
%%%%
\begin{align}
\nonumber &\beta_{0}^{R}=  q + \left( p+\beta  \right)\beta + \alpha_{1}\eta_{1} + \alpha_{2}\eta_{2}   \; , \quad \beta_{n}^{R}= q+\alpha_{1} +\alpha_{2} + \left(p+\beta \right) \beta \; , n \geq 1\, , \\
\nonumber  &\alpha_{1}^{R} = \gamma\left( p+2\beta \right) +\alpha_{1} \alpha_{2}\eta_{2}\; , \quad \alpha_{n}^{R}=\alpha_{1} \alpha_{2} + \gamma(p+2\beta) \; , n \geq 2\, , \\
\nonumber  &
\gamma_{n}^{R} = \gamma^{2} \; , \; n \geq 1. 
\end{align}

With regard to the sequences  $\{a_{n}\}_{n \geq 0}$ and $\{b_{n}\}_{n \geq 0}$  we get   $\deg\left( a_{n}(x)  \right)=n\, , \, n \geq 0$ with $l_{a,n}=-p-\beta-\tau$ and $\deg\left( b_{n}(x)  \right)=n\, , \, n \geq 0$ with $l_{a,n}=a-\tau$. Supposing $\tau \neq -p-\beta$ and $\tau \neq a$, we define the MPSs
$A_{n}(x)= l_{a,n}^{-1} a_{n}(x)$ and $B_{n}(x)= l_{b,n}^{-1} b_{n}(x)\, , \, n \geq 0$ and we obtain
\begin{align}
\nonumber &\beta_{0}^{A}= q+ \alpha_{1} +\alpha_{2}\eta_{2}+ \frac{p\beta\left( p+2\beta\right)+\beta^3-\gamma+\gamma\xi+\tau\left( p\beta+\beta^2 \right)}{p+\beta+\tau}\\
\nonumber &\beta_{n}^{A}= q+\alpha_{1} +\alpha_{2} + \left(p+\beta \right) \beta \; , n \geq 1\, , \\
\nonumber  &\alpha_{1}^{A} =\alpha_{1} \alpha_{2}+  \frac{\gamma \left(p^2- \alpha_{2} +3p\beta+2\beta^2+\alpha_{2}\eta_{2}+\tau\left(p+2\beta \right)
\right) }{p+\beta+\tau}\\
\nonumber & \alpha_{n}^{A}= \alpha_{1} \alpha_{2} + \gamma(p+2\beta)  \; , n \geq 2\, , \\
\nonumber  & \gamma_{n}^{A} = \gamma^{2} \; , \; n \geq 1\, ;\\
\nonumber &\\
%%%%%
\nonumber &\beta_{0}^{B}= q+p\beta +\beta^2+\alpha_{2}\eta_{2}+ \frac{\alpha_{1}\eta_{1}\left(a+ p+\beta\right)-\gamma\xi}{a-\tau}\\
\nonumber &\beta_{n}^{B}= q+\alpha_{1} +\alpha_{2} + \left(p+\beta \right) \beta \; , n \geq 1\, , \\
\nonumber  &\alpha_{1}^{B} =  \gamma(p+2\beta) +\alpha_{1} \alpha_{2}\eta_{2}+  \frac{\gamma\alpha_{1} \left(\eta_{1}-\xi \right) }{a-\tau}\, , \\
\nonumber & \alpha_{n}^{B}= \alpha_{1} \alpha_{2} + \gamma(p+2\beta)  \; , n \geq 2\, , \\
\nonumber  & \gamma_{n}^{B} = \gamma^{2} \; , \; n \geq 1. 
\end{align}
Finally, we remark that the data obtained allow us to assert that none of the four $2$-orthogonal MPSs are in general $2$-classical.
The alternative hypothesis $\tau =a$ was also treated with analogous conclusions, except for the sequence $\{b_{n}\}_{n \geq 0}$  for which we get   $\deg\left( b_{n}(x)  \right)=n-1\, , \, n \geq 0$ with 
\newline $b_{n}(x) =\left(\gamma \xi -\alpha_{1}\left( a+p+\beta\right) \eta_{1} \right) x^{n-1} +  \, (\textrm{terms\, of\,degree\,less\,than\,}\, n-1 )$ and thus further suppositions must be taken in order to define a MPS $\{\bar{B}_{n}\}_{n \geq 0}$ as we did before.

\subsection{Perturbation of order  two (II)}
Let us now consider another perturbation of order two, where only the first two $\beta$s are altered by means of the constants $\tau_{1} $ and $\tau_{2}$ assuming that $\tau_{1}\neq -p-\beta$ and $\tau_{2} \neq \beta$.
\begin{align}\label{main2MPS-pert-order2-II}
&\beta_{0}= \tau_{1} \; , \; \beta_{1}= \tau_{2}\, , \quad \beta_{2n}= -(p+\beta) \; , \; \beta_{2n+1}= \beta\;, \; n \geq 1\,, \\
%\nonumber &\beta_{2n+1}= \beta \; , \quad n \geq 1\, , \\
%\nonumber & \chi_{0,0}=\alpha_{1}\eta_{1}\, ,\; \chi_{1,1}=\alpha_{2}\eta_{2}\, ,\; \\
\nonumber &\chi_{2n,2n} = \alpha_{2n+1}  = \alpha_{1} \; , \quad  \chi_{2n+1,2n+1} = \alpha_{2n+2}  = \alpha_{2} \;\; , n \geq 0\, , \\
\nonumber& \chi_{n,n-1} = \gamma_{n}  = (-1)^{n} \gamma \quad , \, n \geq 1\, , \\
\nonumber & \chi_{n,\nu} = 0  \; , \quad 0 \leq \nu < n-1   \quad ,\,  n \geq 1 \, .
\end{align}
We get for the sequences $\{ P_{n} \}_{n \geq 0}$ and $\{ R_{n} \}_{n \geq 0}$:
\begin{align}
\nonumber &\beta_{0}^{P}= a p + q + \alpha_{1}+ a \left( \tau_{1}+\tau_{2}\right)-\tau_{1}\tau_{2}  \, ,\\
\nonumber &\beta_{n}^{P}= q+\alpha_{1} +\alpha_{2} + \left(p+\beta \right)\beta \; ,\; n \geq 1\, ,\\
\nonumber &\alpha_{1}^{P} = \alpha_{1}\alpha_{2} -a\alpha_{2}\beta  +\beta\gamma + \alpha_{2}\beta \tau_{1}-\gamma \tau_{1} + \alpha_{2}\tau_{2}\left(a-\tau_{1}\right)\; , \quad \\
\nonumber & \alpha_{n}^{P}=\alpha_{1} \alpha_{2} + \gamma(p+2\beta)  \; , \; n \geq 2 \, ,\quad \gamma_{n}^{P} = \gamma^{2} \; , \; n \geq 1;\\
\nonumber &\\
%\end{align}
%%%%
%\begin{align}
\nonumber &\beta_{0}^{R}=  q + \alpha_{1}+\alpha_{2}+\beta\left( p+\tau_{1}+\tau_{2}  \right) -\tau_{1}\tau_{2}    \; , \\
\nonumber &\beta_{n}^{R}= q+\alpha_{1} +\alpha_{2} + \left(p+\beta \right) \beta \, ,\,  n \geq 1\, , \\
\nonumber  & \alpha_{1}^{R}=\alpha_{1} \alpha_{2} + \gamma(p+\beta+\tau_{2}) \; , \\
\nonumber  & \alpha_{n}^{R}=\alpha_{1} \alpha_{2} + \gamma(p+2\beta) \; , \, n \geq 2\, , \quad \gamma_{n}^{R} = \gamma^{2} \; , \; n \geq 1. 
\end{align}
With regard to the sequences  $\{a_{n}\}_{n \geq 0}$ and $\{b_{n}\}_{n \geq 0}$  we get   $\deg\left( a_{n}(x)  \right)=n\, , \, n \geq 0\, ,$ with $l_{a,n}=-p-\tau_{1}-\tau_{2}$ and $\deg\left( b_{n}(x)  \right)=n\, , \, n \geq 0\, ,$ with $l_{b,0}=a-\tau_{1}\, $ and $l_{b,n}=a+\beta-\tau_{1}-\tau_{2}$. Supposing also that $\tau_{1} \neq a,$ and $\tau_{1}+\tau_{2} \neq a+\beta, -p$, we define the MPSs
$A_{n}(x)= l_{a,n}^{-1} a_{n}(x)\, , \, n \geq 0,$ and $B_{n}(x)= l_{b,n}^{-1} b_{n}(x)\, , \, n \geq 0$ and we obtain
\begin{align}
\nonumber &\beta_{0}^{A}= q+ \alpha_{1} + \frac{\alpha_{2}\left( p+\tau_{1} +\beta \right)+p^2\beta+p\beta^2 +\left( \tau_{1}+\tau_{2} \right)\left( p\beta+\beta^2 \right)}{p+\tau_{1}+\tau_{2}}\, ,\\
\nonumber &\beta_{n}^{A}= q+\alpha_{1} +\alpha_{2} + \left(p+\beta \right) \beta \; , n \geq 1\, , \\
\nonumber & \alpha_{n}^{A}= \alpha_{1} \alpha_{2} + \gamma(p+2\beta)  \; , n \geq 1\, , \\
\nonumber  & \gamma_{n}^{A} = \gamma^{2} \; , \; n \geq 1\, ;\\
\nonumber &\\
%\end{align}
%%%%
%\begin{align}
\nonumber &\beta_{0}^{B}=q+ \frac{a\left(\alpha_{1}+\alpha_{2} \right)+ p\left( \alpha_{1} +a\beta \right)+\beta\alpha_{1}-\gamma+ a \beta \left( \tau_{1}+\tau_{2}\right)-\tau_{1}\alpha_{2}- \tau_{1}\tau_{2}\left( a+p+\beta \right) }{a+\beta-\tau_{1}-\tau_{2}}\\
\nonumber &\beta_{n}^{B}= q+\alpha_{1} +\alpha_{2} + \left(p+\beta \right) \beta \; , n \geq 1\, , \\
\nonumber  & \alpha_{1}^{B} = \frac{\left(a- \tau_{1}\right) \left(\alpha_{1}\alpha_{2}+\gamma\left( p+\beta+\tau_{2} \right) \right) }{a+\beta-\tau_{1}-\tau_{2}}\, , \quad  \alpha_{n}^{B}= \alpha_{1} \alpha_{2} + \gamma(p+2\beta)  \; , n \geq 2\, , \\
\nonumber  & \gamma_{1}^{B} = \frac{\gamma^2\left(a- \tau_{1}\right) }{a+\beta-\tau_{1}-\tau_{2}}\, , \quad \gamma_{n}^{B} = \gamma^{2} \; , \; n \geq 2. 
\end{align}
We finally conclude that none of the four $2$-orthogonal MPSs is $2$-classical.

\section{Analytical overview}

Let us now rewrite Proposition \eqref{prop_nova} taking into account the particular SCs of a $2$-orthogonal MPS  $\chi_{n,n}= \alpha_{n+1} \; , \; n \geq 0$, 
$\chi_{n,n-1} = \gamma_{n}\; , \; n \geq 1$ and 
$ \chi_{n,\nu} = 0  \; , \;\; 0 \leq \nu < n-1  \, , $ $n \geq 0$.

\begin{eqnarray}
\label{2orto-qd-0} && \; b_0(x)=a-\beta_0\, ,\, \\ [5pt]
%%%
\label{2orto-qd-2.6} &&\; P_{n+1}(x)= -\alpha_{2n+1}P_n(x)+\left(x-\omega(a)\right)R_n(x)
\\
\nonumber &&\hspace{4cm}+\left(a-\beta_{2n+1}\right)b_n(x)- \gamma_{2n}b_{n-1}(x)\, ,\,  \\ [5pt]
%%%
%%%%%%
\label{2orto-qd-2.7} &&\; a_n(x)=-\alpha_{2n+1}a_{n-1}(x)-\left(a+p+\beta_{2n+1}\right)R_n(x)\\
\nonumber &&\hspace{4cm}+b_n(x)-\gamma_{2n}R_{n-1}(x)\, ,\,\\ [5pt]
%%%%%
\label{2orto-qd-2.8} && \; b_{n+1}(x)=-\alpha_{2n+2}b_n(x)+\left(a-\beta_{2n+2}\right)P_{n+1}(x) \\
\nonumber  &&\hspace{4cm}+
\left(x-\omega(a)\right)a_n(x) -\gamma_{2n+1}P_n(x)\, ,\,\\  [5pt]
%%%%%
\label{2orto-qd-2.9} && \; R_{n+1}(x)=-\alpha_{2n+2}R_n(x)+P_{n+1}(x)-\left(a+p+\beta_{2n+2}\right)a_n(x) \\
\nonumber  &&\hspace{4cm} -\gamma_{2n+1}a_{n-1}(x)\, ,\,\\
\nonumber  &&\textrm{with}\; n \geq 0\,  \; \textrm{and}\;R_{-1}(x)=a_{-1}(x) = b_{-1}(x)=0.
\end{eqnarray}
\begin{lemma}
The general quadratic decomposition of any $2$-orthogonal MPS is not diagonal.
\end{lemma}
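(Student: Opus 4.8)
The plan is a short proof by contradiction built directly on the simplified relations \eqref{2orto-qd-0}--\eqref{2orto-qd-2.9}, which hold for the QD of any $2$-orthogonal MPS. Assume that for some admissible choice of the parameters $a,p,q$ the decomposition is diagonal, i.e. $a_n(x)=b_n(x)=0$ for all $n\ge 0$.

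The first step is to feed $a_n\equiv b_n\equiv 0$ into \eqref{2orto-qd-2.8}. The whole left-hand side, together with the terms $-\alpha_{2n+2}b_n(x)$ and $\left(x-\omega(a)\right)a_n(x)$ on the right, vanishes, leaving the polynomial identity
\[
\left(a-\beta_{2n+2}\right)P_{n+1}(x)=\gamma_{2n+1}\,P_n(x),\qquad n\ge 0 .
\]
The second step is a degree count: since $\{P_n\}_{n\ge 0}$ is one of the principal components it is a genuine MPS, so $P_{n+1}$ is monic of degree $n+1$ while $\deg P_n=n$. Comparing the coefficient of $x^{n+1}$ forces $a=\beta_{2n+2}$, after which the identity collapses to $\gamma_{2n+1}P_n(x)=0$, hence $\gamma_{2n+1}=0$ for every $n\ge 0$ (the case $n=0$, where $P_0=1$, is covered without change).

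The third and final step is to contradict this with $2$-orthogonality. Under the dictionary recalled just above \eqref{2orto-qd-0}, the coefficient $\gamma_{2n+1}$ appearing in \eqref{2orto-qd-2.8} is exactly the structure coefficient $\chi_{2n+1,2n}$, and the regularity conditions of Theorem~\ref{recurrence relation for d ortho} for $d=2$ (the non-vanishing of the $\gamma^{0}$-type coefficients) guarantee $\chi_{m,m-1}\neq 0$ for all $m\ge 1$; in particular $\gamma_{2n+1}\neq 0$. This contradiction shows that no choice of $a,p,q$ produces a diagonal QD.

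I expect the only delicate point to be the bookkeeping in the third step — keeping straight that the symbols written $\gamma_{2n}$ and $\gamma_{2n+1}$ in \eqref{2orto-qd-0}--\eqref{2orto-qd-2.9} are the $\chi_{m,m-1}$ coefficients, precisely the ones $2$-orthogonality forces to be nonzero. Everything else is routine; incidentally, the same contradiction can be extracted from \eqref{2orto-qd-2.7}, which under $a_n\equiv b_n\equiv 0$ reads $\left(a+p+\beta_{2n+1}\right)R_n(x)=-\gamma_{2n}R_{n-1}(x)$, whence the analogous degree argument gives $\gamma_{2n}=0$ for $n\ge 1$, again impossible.
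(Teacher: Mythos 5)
Your proof is correct and is essentially the paper's argument: the paper assumes diagonality, specializes one of the identities \eqref{2orto-qd-0}--\eqref{2orto-qd-2.9} (it uses \eqref{2orto-qd-2.7}, forcing $\gamma_{2n}=0$), and contradicts the regularity condition $\gamma_m\neq 0$, $m\geq 1$. You do the same via the mirror identity \eqref{2orto-qd-2.8} (forcing $\gamma_{2n+1}=0$), with a slightly more explicit degree count, and your closing remark about \eqref{2orto-qd-2.7} is precisely the paper's proof.
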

\begin{proof}
If we suppose a diagonal QD, identity \eqref{2orto-qd-2.7} reads
$$0=-\left(a+p+\beta_{2n+1}\right)R_n(x)-\gamma_{2n}R_{n-1}(x)$$
implying $a+p+\beta_{2n+1}=0$ and $\gamma_{2n}=0$. Since this latest contradicts the regularity, the proof is complete. 
\end{proof}

Remarking that identities \eqref{2orto-qd-2.7} and \eqref{2orto-qd-2.9} allow us to express $b_{n}(x)$ and $P_{n+1}(x)$ in terms of elements of $\{ R_{n} \}_{n\geq 0}$ and $\{ a_{n} \}_{n\geq 0}$:
\begin{align*}
&b_n(x)= a_n(x)+\alpha_{2n+1}a_{n-1}(x)+\left(a+p+\beta_{2n+1}\right)R_n(x)+\gamma_{2n}R_{n-1}(x)\\
&P_{n+1}(x)=R_{n+1}(x)+\alpha_{2n+2}R_n(x)+\left(a+p+\beta_{2n+2}\right)a_n(x)+\gamma_{2n+1}a_{n-1}(x)
\end{align*}
we are able to transform \eqref{2orto-qd-2.8} into the following relation:
\begin{align}
\nonumber &a_{n+1}(x)-\Big(x-\omega(a)+\left(a-\beta_{2n+2}\right)\left(a+p+\beta_{2n+2}\right) - \alpha_{2n+3}-\alpha_{2n+2}\Big) a_n(x)\\
\nonumber &+\Big(\alpha_{2n+2}\alpha_{2n+1} + \gamma_{2n+1}\left(p+\beta_{2n+2}+\beta_{2n}\right) \Big)a_{n-1}(x) +\gamma_{2n+1}\gamma_{2n-1}a_{n-2}(x)\\
\label{as-rs} &+ \left( p+\beta_{2n+3}+\beta_{2n+2} \right) R_{n+1}(x)\\
\nonumber&+ \Big(\gamma_{2n+2}+\gamma_{2n+1}+\alpha_{2n+2} \left( p+\beta_{2n+2}+\beta_{2n+1} \right)  \Big)R_{n}(x)\\
\nonumber& +\Big(\alpha_{2n+2} \gamma_{2n}+\gamma_{2n+1}\alpha_{2n} \Big)R_{n-1}(x)=0\, ,\; n \geq 1.
\end{align}
Likewise, identity \eqref{2orto-qd-2.6} implies the next relation:
\begin{align}
\nonumber&R_{n+1}(x)-\Big(x-\omega(a) + \left(a-\beta_{2n+1}\right)\left(a+p+\beta_{2n+1}\right)  -\alpha_{2n+2}-\alpha_{2n+1}\Big) R_n(x)\\
\nonumber &+\Big(\alpha_{2n+1}\alpha_{2n} + \gamma_{2n}\left(p+\beta_{2n+1}+\beta_{2n-1}\right)\Big)R_{n-1}(x)+\gamma_{2n}\gamma_{2n-2}R_{n-2}(x) \\
\label{rs-as}&+ \left( p+\beta_{2n+2}+\beta_{2n+1} \right) a_{n}(x)\\
\nonumber &+ \Big(\gamma_{2n+1}+\gamma_{2n}+\alpha_{2n+1} \left( p+\beta_{2n+1}+\beta_{2n} \right)  \Big)a_{n-1}(x)\\
\nonumber& +\Big(\alpha_{2n+1} \gamma_{2n-1}+\gamma_{2n}\alpha_{2n-1} \Big)a_{n-2}(x)=0\, ,\; n \geq 1.
\end{align}
On the other hand, identities \eqref{2orto-qd-2.6} and \eqref{2orto-qd-2.8} allow us to express $\left(x-\omega(a)\right)R_{n}(x)$ and $\left(x-\omega(a)\right)a_{n}(x)$ in terms of elements of $\{ P_{n} \}_{n\geq 0}$ and $\{ b_{n} \}_{n\geq 0}$:
\begin{align*}
& \left(x-\omega(a)\right) R_n(x)= -\left(a-\beta_{2n+1} \right) b_n(x)+\gamma_{2n}b_{n-1}(x)+P_{n+1}(x)+\alpha_{2n+1}P_{n}(x)\\
& \left(x-\omega(a)\right) a_n(x) =b_{n+1}(x)+\alpha_{2n+2}b_n(x)-\left(a-\beta_{2n+2}\right)P_{n+1}(x)+\gamma_{2n+1}P_{n}(x)
\end{align*}
which provides a suitable replacement of each term of identity \eqref{2orto-qd-2.7} when multiplied by $\left(x-\omega(a)\right)$, yielding:
%%%%%
\begin{align}
\nonumber&b_{n+1}(x)-\Big(x-\omega(a)+\left(a-\beta_{2n+1}\right)\left(a+p+\beta_{2n+1}\right) - \alpha_{2n+2}-\alpha_{2n+1}\Big) b_n(x)\\
\nonumber &+\Big(\alpha_{2n+1}\alpha_{2n} +\gamma_{2n} \left(p+\beta_{2n+1}+\beta_{2n-1}\right)\Big)b_{n-1}(x) + \gamma_{2n}\gamma_{2n-2}b_{n-2}(x) \\
\label{bs-ps} &+ \left( p+\beta_{2n+2}+\beta_{2n+1} \right) P_{n+1}(x)\\
\nonumber&+ \Big(\gamma_{2n+1}+\gamma_{2n}+\alpha_{2n+1} \left( p+\beta_{2n+1}+\beta_{2n} \right)  \Big)P_{n}(x)\\
\nonumber& +\Big( \alpha_{2n+1} \gamma_{2n-1}+\gamma_{2n}\alpha_{2n-1} \Big)P_{n-1}(x)=0\, ,\; n \geq 1.
\end{align}
Finally, when we multiply relation \eqref{2orto-qd-2.9} by $\left(x-\omega(a)\right)$ and we substitute all terms by an equal expression given by \eqref{2orto-qd-2.6} and \eqref{2orto-qd-2.8} we get:
%%%%%
\begin{align}
\nonumber&P_{n+2}(x)-\Big(x-\omega(a)+\left(a-\beta_{2n+2}\right)\left(a+p+\beta_{2n+2}\right) - \alpha_{2n+3}-\alpha_{2n+2} \Big) P_{n+1}(x)\\
\nonumber &+\Big(\alpha_{2n+2}\alpha_{2n+1} +\gamma_{2n+1}\left(p+\beta_{2n+2}+\beta_{2n}\right)\Big)P_{n}(x) +\gamma_{2n+1}\gamma_{2n-1}P_{n-1}(x)\\
 \label{ps-bs} &+ \left( p+\beta_{2n+3}+\beta_{2n+2} \right) b_{n+1}(x)\\
\nonumber&+ \Big(\gamma_{2n+2}+\gamma_{2n+1}+\alpha_{2n+2} \left( p+\beta_{2n+2}+\beta_{2n+1} \right)  \Big)b_{n}(x)\\
\nonumber& +\Big( \alpha_{2n+2} \gamma_{2n}+\gamma_{2n+1}\alpha_{2n} \Big)b_{n-1}(x)=0\, ,\; n \geq 1.
\end{align}

\medskip

The SCs defined in \eqref{main2MPS} guarantee that the coefficients of the three latest terms of identities \eqref{as-rs} - \eqref{ps-bs} vanish, because 
\begin{align*}
& p+\beta_{2n+3}+\beta_{2n+2}=0\, , \; \; n \geq 0\, ,\\
& p+\beta_{2n+2}+\beta_{2n+1}=0\, , \; \; n \geq 0\, ,\\
& \gamma_{2n+2}+\gamma_{2n+1}+\alpha_{2n+2} \left( p+\beta_{2n+2}+\beta_{2n+1} \right) = 0 \, , \; n \geq 0\, ,\\
&  \gamma_{2n+1}+\gamma_{2n}+\alpha_{2n+1} \left( p+\beta_{2n+1}+\beta_{2n} \right) = 0 \, , \; n \geq 1\, ,\\
&  \alpha_{2n+2} \gamma_{2n}+\gamma_{2n+1}\alpha_{2n}= 0 \, , \; n \geq 1\, ,\\
&  \alpha_{2n+1} \gamma_{2n-1}+\gamma_{2n}\alpha_{2n-1}= 0 \, , \; n \geq 1\, .
\end{align*}

This proves that the four polynomial sequences that arise from a general QD of the $2$-orthogonal MPS $\{ W_{n}\}_{n \geq 0}$ defined by \eqref{main2MPS} also fulfil recurrence relations of third order whose regularity is assured by the regularity of $\{ W_{n}\}_{n \geq 0}$. More precisely, we obtain for  n$ \geq 1$:
\begin{align}
\label{as-recrel} a_{n+1}(x)&=\Big(x-\omega(a)+\left(a-\beta \right)\left(a+p+\beta \right) - \alpha_{2}-\alpha_{1}\Big) a_n(x)\\
\nonumber &-\Big( \alpha_{2}\alpha_{1}+\gamma\left(p+2\beta \right) \Big)a_{n-1}(x) -\gamma^2 a_{n-2}(x)\, ,\; 
\end{align}
\begin{align}
\label{rs-recrel} R_{n+1}(x) & = \Big(x-\omega(a) + \left(a-\beta \right)\left(a+p+\beta \right)  -\alpha_{2}-\alpha_{1}\Big) R_n(x)\\
\nonumber &-\Big(\alpha_{2}\alpha_{1} + \gamma \left(p+2 \beta \right)\Big)R_{n-1}(x)-\gamma^2 R_{n-2}(x) \, ,\;
\end{align}
%%%
\begin{align}
\label{bs-recrel} b_{n+1}(x)&=\Big(x-\omega(a)+\left(a-\beta \right)\left(a+p+\beta \right) - \alpha_{2}-\alpha_{1}\Big) b_n(x)\\
\nonumber &-\Big(\alpha_{2}\alpha_{1} +\gamma \left(p+2\beta \right)\Big)b_{n-1}(x) - \gamma^2 b_{n-2}(x) \, ,\; 
\end{align}
%%%%%
\begin{align}
\label{ps-recrel} P_{n+2}(x)&=\Big(x-\omega(a)+\left(a-\beta \right)\left(a+p+\beta \right) - \alpha_{2}-\alpha_{1} \Big) P_{n+1}(x)\\
\nonumber &- \Big(\alpha_{2}\alpha_{1} +\gamma \left(p+2\beta \right)\Big)P_{n}(x) - \gamma^2 P_{n-1}(x)\, .\;
\end{align}
In brief, once we have the initial elements of each polynomial sequence we are able to fully characterise the general QD of $\{ W_{n}\}_{n \geq 0}$. We recall that these initial computations were already performed  with the help of the symbolic computational approach  and thus we have now proved the $2$-orthogonality of each polynomial sequence of the general QD. In particular, having obtained the data $a_{\nu}(x)=0\, , \, \nu=0,1, 2$ we easily conclude that $\{a_{n}\}_{n \geq 0}$ is trivial. In addition, we  remark that
\begin{align*}
\omega(a) - \left(a-\beta \right)\left(a+p+\beta \right)  +\alpha_{2}+\alpha_{1}=q+\alpha_{2}+\alpha_{1}+\left(p+\beta\right)\beta .
\end{align*}
When we consider the general QD of any $2$-orthogonal MPS $\{ \tilde{W}_{n}\}_{n \geq 0}$ defined as a perturbation of order $r$ of $\{ W_{n}\}_{n \geq 0}$, we also conclude that relations \eqref{as-recrel}-\eqref{ps-recrel} are fulfilled at least for $n \geq nmax$ for a certain positive integer $nmax > r$ which leaves the final conclusion about the $2$-orthogonality (and other aspects) to the specific data computed for $n = 0, \ldots nmax$.

\medskip

Finally, with respect to the classical character of each MPS obtained as a component sequence of the general QD and looking carefully to the data obtained for the MPS $\{ W_{n} (x)\}_{n \geq 0}$ solely defined by SCs  \eqref{main2MPS}, we must remark that the sequence $\{ R_{n} (x)\}_{n \geq 0}$ fulfils a recurrence relation of the following type, for given constants $\alpha$ and $\gamma$,
\begin{align*}
&\zeta_{n+1}(x)=x \zeta_{n}(x)-\alpha \zeta_{n-1}(x)-\gamma \zeta_{n-2}(x)\, , \quad n \geq 2,\\
& \zeta_{0}(x)=1,\quad \zeta_{1}(x)=x, \quad \zeta_{2}(x)=x^{2}-\alpha\, ,
\end{align*}
and therefore is $2$-classical as it is proved in  \cite{1997-KD-PM-1}, p.31. In fact, being $\{ \beta^{R}_{n} \}_{n\geq 0}$ a constant sequence, it is possible to assume $ \beta^{R}_{n}=0\, , \;n \geq 0$, after applying an affine transformation on $\{ R_{n}(x) \}_{n\geq 0}$ that preserves the $d$-orthogonality.
%%%%
Moreover, we read also in that reference that the MPS $\{\zeta_{n}\}_{n \geq 0}$ is $2$-orthogonal and classical and the MPS $\{\zeta^{[1]}_{n}\}_{n \geq 0}$ fulfils
\begin{align*}
&\zeta^{[1]}_{n+1}(x)=x \zeta^{[1]}_{n}(x)-\alpha \frac{n(n+3)}{(n+1)(n+2)} \zeta^{[1]}_{n-1}(x)-\gamma \frac{(n-1)(n+4)}{(n+1)(n+2)} \zeta^{[1]}_{n-2}(x)\, , \quad n \geq 2,\\
& \zeta^{[1]}_{0}(x)=1,\quad \zeta^{[1]}_{1}(x)=x, \quad \zeta^{[1]}_{2}(x)=x^{2}-\frac{2}{3}\alpha.
\end{align*}

%%%%%%%%%%%%%%%%%%%%%%%%%%%%%%%%%%%%%%%%%%%%%
\section{Conclusions}

\noindent The symbolic implementation of the quadratic decomposition (QD) on a specific $2$-orthogonal MPS defined with constant structure coefficients, modulo two, allowed us to easily infer which of the four sequences of polynomials obtained through any QD are also $2$-orthogonal. The analytical proof of those characteristics was also developed. 

\noindent A co-recursive type $2$-orthogonal MPS was inserted in the symbolic procedure, along with further initial perturbations  on the structure coefficients. The output asserts that the main results gathered initially are maintained when those slight initial changes are performed. 
\noindent A further important characteristic was analysed strictly by computational means and the results proved that in some cases the obtained polynomial sequences are not classical in Hahn's sense. The situations where the classical character was detected can be connected to a concrete family studied in  \cite{1997-KD-PM-1}.

We should stress that the entire set of $2$-orthogonal MPSs quadratically decomposed are not $2$-classical as it was demonstrated symbolically, nevertheless, the components of the resultant general QD hold the $2$-orthogonality and either are $2$-classical or are perturbations of a $2$-classical MPS.

%%%%%%%%%%%%%%%%%%%%%%%%%%%%%%%%%%%%%%%%%%%%
\section*{Acknowledgements}

\noindent  This work was partially supported by
\newline CMUP (UID/MAT/00144/2019), which is funded by FCT (Portugal) with national (ME) and European structural funds through the programs FEDER, under the partnership agreement PT2020.

\end{document}